\newcommand{\R}{\mathbb{R}}
\newcommand{\N}{\mathbb{N}}
\newcommand{\calR}{\mathcal{R}}
\newcommand{\calX}{\mathcal{X}}
\newcommand{\calP}{\mathcal{P}}
\renewcommand{\L}{\mathcal{L}}
\newcommand{\E}{\mathbb{E}}
\newcommand{\Tr}{\operatorname{Tr}}
\newcommand{\diag}{\operatorname{diag}}
\newcommand{\x}{\boldsymbol{x}}
\newtheorem{theorem}{Theorem}
\newtheorem{assumption}{Assumption}
\newtheorem{remark}{Remark}
\newtheorem{lemma}{Lemma}
\newtheorem{corollary}{Corollary}
\title{Convergence and Sketching-Based Efficient Computation of Neural Tangent Kernel Weights in Physics-Based Loss}
\author{Max Hirsch\,\orcidlink{0009-0003-6496-6664}\\Department of Mathematics\\University of California, Berkeley\\Berkeley, CA 94720, USA\\\texttt{mhirsch@berkeley.edu} \and Federico Pichi\,\orcidlink{0000-0002-1163-3386}\\mathLab, Mathematics Area\\SISSA\\I-34136 Trieste, Italy\\\texttt{fpichi@sissa.it}}
\date{\today}
\begin{document}

\maketitle

\begin{abstract}
    In multi-objective optimization, multiple loss terms are weighted and added together to form a single objective. These weights are chosen to properly balance the competing losses according to some meta-goal. For example, in physics-informed neural networks (PINNs), these weights are often adaptively chosen to improve the network's generalization error. A popular choice of adaptive weights is based on the neural tangent kernel (NTK) of the PINN, which describes the evolution of the network in predictor space during training. The convergence of such an adaptive weighting algorithm is not clear \textit{a priori}. Moreover, these NTK-based weights would be updated frequently during training, further increasing the computational burden of the learning process. In this paper, we prove that under appropriate conditions, gradient descent enhanced with adaptive NTK-based weights is convergent in a suitable sense. We then address the problem of computational efficiency by developing a randomized algorithm inspired by a predictor-corrector approach and matrix sketching, which produces unbiased estimates of the NTK up to an arbitrarily small discretization error. Finally, we provide numerical experiments to support our theoretical findings and to show the efficacy of our randomized algorithm.
\end{abstract}

\begin{center}
    \textbf{Code Availability: \hyperlink{https://github.com/maxhirsch/Efficient-NTK}{https://github.com/maxhirsch/Efficient-NTK}} 
\end{center}

\section{Introduction}

In multi-objective optimization, the goal is to simultaneously minimize multiple competing loss objectives $\{\L_i(\theta)\}_{i=1}^n$  w.r.t.\ some parameter $\theta$ \cite{collette2004multiobjective,YangMultiobjectiveChapter}. The objectives $\L_i(\theta)$ may not be minimized at the same $\bar\theta$, so it is often impossible to minimize each objective individually at the same time. In general, one can only find a solution in a set of Pareto solutions for which none of the objectives can be decreased without another objective increasing. One approach to finding such a solution to this problem is the so-called \textit{weighted sum method} \cite{Marler2010,Kim2005AdaptiveWeightedSum}. 

In the weighted sum method, one attempts to reach the goal by minimizing the weighted objective
\[
    \L(\theta) = \sum_{i=1}^n w_i\L_i(\theta)
\]
for some weights $w_i \in [0,\infty)$. Choosing a weight $w_i$ that is larger than some $w_j$ will encourage the objective $\L_i(\theta)$ to be minimized more than $\L_j(\theta)$, and modifying the weights results in new minimizers being found. The weighted sum method is known to not be able to find all Pareto solutions in non-convex regions of the Pareto solution set, but the method is still widely used due to its simplicity \cite{Kim2005AdaptiveWeightedSum}. In particular, in the scientific machine learning (SciML) context \cite{SciMLBook,prince2023understanding,QuarteroniCombiningPhysicsbasedDatadriven2025a}, it is a common task when training physics-informed neural networks (PINNs).

A PINN is a method for solving partial differential equations (PDEs) using a neural network \cite{KarniadakisPINN,Karniadakis2021,Meng2025,hao2023physicsinformedmachinelearningsurvey}. The neural network takes as input the spatial (and possibly temporal) coordinates at which the PDE solution must be evaluated, and the output of the network is the PDE solution evaluated at that point. To train a PINN, there are typically a few separate objectives or loss terms depending on whether the PDE is time dependent. For steady problems, the loss objectives include the residual of the differential operator for the PDE (usually the strong form of the problem), and the residual of the boundary condition (when imposed via weak constraints). If the problem is time dependent, there is an additional term corresponding to the (weak) imposition of the initial condition. These losses are combined via the weighted sum method, giving a single loss function to train the PINN. However, choosing the weights for the loss function is known to be a difficult problem, so several methods have been developed to learn these weights adaptively during the training procedure \cite{WangNTK2022,GAO2025,XIANG2022,Wang2024,LogWeights}.

One such method of choosing adaptive weights is based on \textit{neural tangent kernel} (NTK) theory \cite{WangNTK2022}. In that paper, it was shown, and observed for the Poisson equation, that as the width of a physics-informed neural network increases to infinity, the dynamics of the residuals used to define the PINN loss becomes linear. The matrix in the resulting linear ordinary differential equation for the residuals is called the neural tangent kernel. The singular values of the NTK thus determine the rates at which the different residuals, corresponding to the objective terms, decrease to zero. This inspired the authors of \cite{WangNTK2022} to devise a heuristic algorithm for choosing the weights in PINN loss based on the NTK of the PINN. The idea of the algorithm is to weight the losses roughly according to the inverses of the singular values of the NTK to make the rates at which the residuals decrease nearly the same. Furthermore, since in a practical setting the neural network is not infinitely wide, the computed NTK is not constant throughout epochs, requiring the NTK-based loss weights to be updated periodically during training.

This adaptive choice of weights in the loss function raises two main questions. First, does the optimization algorithm used to minimize the aggregate loss function converge when adaptive weights are used? Indeed, multi-objective problems optimized using the weighted sum method result in different minimizers of the objective when different weights are used. If these weights change frequently, it is not clear that an optimization algorithm with adaptive loss weights will ever converge to a single minimizer. Second, can the NTK weight updates be made more efficiently? Computing the NTK is a very costly operation since the matrix multiplication needed to compute a single entry of the NTK matrix of a neural network with $O$ outputs and $P$ parameters has computational complexity $\mathcal{O}(O^2P)$, and $P$ is typically large for modern neural network models, with $P\sim 10^7$ \cite{FastFiniteWidthNTK}. Thus, recomputing the exact weights can hinder the whole training phase by making it unbearable. However, based on the heuristics of the NTK-based weighting algorithm, it seems to be desirable to keep the weights updated. To do so, it is important to find faster ways of approximating the NTK-based weights.

We address each of these two questions. For the first, we apply a well-known ``descent lemma" for non-convex optimization \cite{WardDescentLemma, nesterov2004introductory,StochasticProgramming} to a setting in which a squared error loss is computed with respect to an adaptive inner product. This allows us to show that the average squared norm of the loss gradients using the adaptive NTK weights converges to zero. Then, to address the second question, we develop a simple approach inspired from predictor-corrector methods to obtain fast matrix-vector products with the NTK.

In particular, in a ``predictor" step, we use an initial gradient descent step from $\theta_t$ to $\widehat{\theta}_{t+1}$ with a special choice of randomized weights, allowing us to approximate the product of the NTK with a Gaussian random vector. Combining this with ideas from matrix sketching, commonly used to approximate the range of a matrix with repeated randomized matrix-vector multiplications \cite{RandomizedLinearAlgebraSurvey}, we are able to obtain a fast randomized approximation of the NTK and its trace. We then use this approximation in a ``corrector" step, which consists of returning to the original gradient descent iterate $\theta_t$ and computing the NTK-based weights from our NTK approximation to finally compute $\theta_{t+1}$ via gradient descent.

\textit{Outline.}
The remainder of the manuscript is organized as follows: in Section \ref{sec:literature-review}, we present a literature review of related works. Next in Section \ref{sec:NTK}, we present the use of NTK-based weights in the loss function for training PINNs. In Section \ref{sec:convergence}, we prove convergence results for optimization of the PINN loss function involving adaptive weights. Then in Section \ref{sec:fast-NTK}, we introduce a method for obtaining a fast approximation of the NTK and to allow frequent updates of the adaptive PINN loss weights. Finally, we present some numerical applications in Section \ref{sec:numerics} that verify our theoretical results and demonstrate the efficacy of our fast NTK approximation to compute the adaptive loss weights.

\subsection{Literature Review}\label{sec:literature-review}

In the physics-informed machine learning community, there has been extensive work on the use of adaptive weights for PINN losses. In \cite{WangNTK2022}, the authors use weights for the loss based on the neural tangent kernel.
The paper \cite{GAO2025} develops a weighting algorithm that assigns larger loss weights to the largest objectives. 
The authors of \cite{XIANG2022} and \cite{Wang2024} use maximum likelihood-based approaches to compute the loss weights.
In \cite{WangMitigating}, the authors connect the difficulty of training PINNs to an imbalance in the magnitudes of the gradients of the different constituent loss functions, and develop adaptive loss weights based on the relative sizes of the various loss gradients.
The paper \cite{LogWeights} also presents a variant of the gradient-based weights combined with the logarithmic mean. 
Despite the great body of literature in this important topic, up to our knowledge, there has not been any work showing convergence results for these types of loss weighting algorithms.

Recently, following the work \cite{WangNTK2022} in which the authors show convergence of the PINN NTK in the infinite-width limit, the neural tangent kernel has increasingly been used in the context of physics-informed neural networks, mainly for analyzing new algorithms and architectures. The paper \cite{PINNNTKGeneral} extends the analysis of the PINN NTK to more general PDEs, while \cite{SA-PINNs} uses the NTK to analyze a new scheme for adaptive loss weights. In \cite{GPINN-NTK}, the authors use the NTK to analyze a PINN trained with an additional loss for the gradient of the PDE residual. Finally, the authors of \cite{PIKANs} use the NTK to analyze physics-informed Kolmogorov-Arnold Networks.

Concerning the convergence of optimization algorithms for PINNs, so far there have been several different results. In \cite{PINNMinimizers}, the authors show for linear second-order elliptic and parabolic PDEs that, given a sequence of training sets for PINNs, the sequence of minimizers of the PINN loss converges to the PDE solution. In the setting of linear second-order PDEs, the paper \cite{OverparameterizedGDPINN} shows convergence of gradient descent for overparameterized two-layer PINNs to global minimizers of the loss, while \cite{OverparameterizedSGDPINN} considers a similar problem focused on stochastic gradient descent and Poisson's equation. In \cite{AppliedOptimizationPINN}, the authors present a more applied study of optimization for PINNs. Finally, \cite{PDEOrderPINNOptimization} considers the effect of PDE order on the PINN optimization.

There are several works on the fast estimation of the neural tangent kernel in different contexts. 
In \cite{FastFiniteWidthNTK}, the authors analyze the cost of exact computation of the NTK for finite-width neural networks.
The authors in \cite{FastApproximationENTK} approximate the full neural tangent kernel of a network using a smaller matrix independent of the dimension of the network output. 
Moreover, the authors of \cite{BayesianNetworksMonteCarlo} propose a Monte Carlo estimator of the NTK, while in \cite{FastNTKSketchRandomFeatures} the NTK is computed by combining sketching with random features.
Finally, there are also works on faster computation of graph neural tangent kernels for graph neural networks \cite{GNNReview,GNN-Intro} using sketching techniques \cite{FastGraphNTK}. 

\section{Neural Tangent Kernel}\label{sec:NTK}

We now describe the setting of physics-informed neural networks and their corresponding neural tangent kernels. In fact, the NTK of a PINN differs from that of a typical neural network because of the presence of a differential operator in the PINN loss function. Thus, we start by presenting the loss function of a general PINN before defining its NTK, along with the adaptive loss weighting algorithm from \cite{WangNTK2022}.

\subsection{Physics-Informed Neural Network Loss}

We consider the problem of finding $u_\theta : \Omega\subseteq\R^d\to\R^O$ parametrized by the network parameters $\theta\in\calP\subseteq\R^p$ which approximately solves the following general partial differential equation defined in the domain $\Omega$ w.r.t.\ the unknown $u$:
\[
    \begin{cases}
        \mathcal{D} u = f &\text{in }\Omega,\\
        \mathcal{B} u = g &\text{in }\partial\Omega,
    \end{cases}
\]
where $\mathcal{D}$ is a differential operator, $\mathcal{B}$ defines boundary (and possibly initial) conditions, $f:\Omega\to\R^O$ is a forcing term, and $g:\partial\Omega\to\R^O$ are boundary data. For simplicity of presentation, we assume $O=1$, but everything extends in a straightforward manner to multi-dimensional output, i.e.\ $O>1$ (e.g.\ the numerical example in Section \ref{ne:q-tensor} has $O=2$).

To be more precise in the formulation of the problem, we consider the collocation points $\calX_D = \{x_{D}^{i}\}_{i=1}^{n_D}$ and $\calX_B = \{x_{B}^{i}\}_{i=1}^{n_B}$ in $\Omega$, respectively for the interior and the boundary regions, with $n_D + n_B = n$ and $\calX = \calX_D\cup\calX_B$, and define the \textit{residual} as $\calR:\calP \to \R^n$ by
\[
    \calR(\theta)_i = \begin{cases}
        \mathcal{D}u_\theta(x_{D}^{i}) - f(x_{D}^{i}) &\text{if } 1\le i \le n_D,\\
        \mathcal{B}u_\theta(x_{B}^{i-n_D}) - g(x_{B}^{i-n_D}) &\text{if } n_D+1 \le i \le n.
    \end{cases}
\]
Then, we want to minimize the physics-informed loss function defined as the weighted sum
\begin{align*}
    \mathcal{L}(\theta) 
    &= \lambda_D\sum_{i=1}^{n_D} |\mathcal{D}u_\theta(x_{D}^{i}) - f(x_{D}^{i})|^2 + \lambda_B\sum_{i=1}^{n_B} |\mathcal{B}u_\theta(x_{B}^{i}) - g(x_{B}^{i})|^2\\ 
    &= \lambda_D\sum_{i=1}^{n_D} |\calR(\theta)_i|^2 + \lambda_B\sum_{i=n_D+1}^n |\calR(\theta)_i|^2,
\end{align*}
where we refer to $\lambda_D, \lambda_B > 0$ as our loss weights.

\begin{remark}
    It is common to refer to the network parameters $\theta$ as the weights of a neural network, and this should not be confused with the loss weights $\lambda_D,\lambda_B$ defined above. The way in which we use ``weights" and ``parameters" should be clear from the context, but we will never refer to the loss weights as parameters.
\end{remark}

More generally, one could minimize the loss function $\L:\R^p\to[0,\infty)$ given by
\begin{equation}
\label{eq:objective}
    \L(\theta) = \frac12 \calR(\theta)^\top\Lambda \calR(\theta),
\end{equation}
where $\Lambda \in \R^{n\times n}$ is a fixed symmetric and positive definite matrix. 

It follows that $\mathcal{L}$ is the squared norm of $\calR$ with respect to some inner product determined by $\Lambda$ (which is yet to be chosen), thus we will also use the notation $\mathcal{L}(\theta) = \frac12 \|\calR(\theta)\|_\Lambda^2$ to denote the same quantity. We remark that this generalization includes the case in which we give three separate weights for the interior PDE residual, the boundary conditions, and the initial condition for time dependent PDEs. In this case, we will denote the number of points used for different parts of the boundary of $\Omega$ as $n_{B_1}, n_{B_2},\dots$, and the number of points used for the initial condition and initial time derivative as $n_{B_i}$ and $n_{D_i}$, respectively. We will also differentiate between norms by using the notation $\|\cdot\|$ to denote the spectral norm for matrices and the Euclidean norm for vectors, as well as $\|\cdot\|_F$ for the Frobenius norm. Analogously, $\langle\cdot,\cdot\rangle$ will denote the Euclidean inner product and $\langle\cdot,\cdot\rangle_F$ the Frobenius inner product.

\subsection{Neural Tangent Kernel Weights for Loss}

Having defined the PINN loss, we now consider the gradient flow used to identify suitable parameters $\theta$ that make $u_\theta$ a good approximation to the PDE solution $u$. We start by assuming that $\Lambda$ is independent of $\theta$, as is the typical context for training PINNs with non-adaptive weights. The reason for starting with non-adaptive weights is to motivate the adaptive weighting algorithm from \cite{WangNTK2022}, which is the object of our analysis. The learning process is thus governed by 
\begin{equation}
\label{eq:inner-product-gd}
    \frac{d\theta}{dt} = -\nabla\mathcal{L}(\theta(t)) = -\nabla \calR(\theta) \Lambda \calR(\theta),
\end{equation}
where $\nabla \calR(\theta) = \begin{bmatrix} \nabla \calR_1(\theta) & \dots & \nabla \calR_n(\theta)\end{bmatrix} \in \R^{p\times n}$. In general, the variables with respect to which the gradient is taken will be clear from the context. When it requires specifying, we will denote it as, \textit{e.g.}, $\nabla_\theta$.

On the other hand, when computing the time derivative of the residual term, we have
\[
    \frac{d\calR(\theta(t))}{dt} = \nabla\calR(\theta)^\top \frac{d\theta}{dt} = -\nabla\calR(\theta)^\top\nabla\calR(\theta)\Lambda\calR(\theta) = -K(\theta)\Lambda\calR(\theta),
\]
where $K(\theta) = \nabla\calR(\theta)^\top\nabla\calR(\theta)$ is defined as the \textit{neural tangent kernel} of the PINN. To better understand the contribution of the interior and boundary components, we can partition $K(\theta)$ into four blocks $K_{DD} \in \R^{n_D\times n_D}$, $K_{BB} \in \R^{n_B\times n_B}$, $K_{DB} \in \R^{n_D\times n_B}$, and $K_{BD} \in \R^{n_B\times n_D}$ given the original arrangement of the residual $\calR(\theta)$ so that
\[
    K(\theta) = \begin{pmatrix}
        K_{DD}(\theta) & K_{DB}(\theta)\\
        K_{BD}(\theta) & K_{BB}(\theta)
    \end{pmatrix}.
\]
Because $K(\theta) = \nabla\calR(\theta)^\top\nabla\calR(\theta)$ is symmetric, we have that $K_{DD}$ and $K_{BB}$ are symmetric, and $K_{DB} = K_{BD}^\top$. If we consider the discrete approximation of \eqref{eq:inner-product-gd}, \textit{i.e.}, the gradient descent algorithm
\begin{equation}\label{eq:non-adaptive-gd}
    \theta_{t+1} = \theta_t - \eta \nabla\calR(\theta_t)\Lambda\calR(\theta_t),\quad \theta_0 = \theta(0),
\end{equation}
where $\eta>0$ is the learning rate, then the loss weighting scheme proposed in \cite{WangNTK2022} suggests modifying $\Lambda$ at each iteration to obtain parameter-dependent weights $\Lambda = \Lambda(\theta_t)$ which balance the two components of the loss given by the PDE residuals ($\calR(\theta)_i$ for $1\le i\le n_D$) and the boundary residuals ($\calR(\theta)_i$ for $n_D+1 \le i \le n$). This results in the following iteration scheme with parameter-dependent weights 
\begin{equation}\label{eq:adaptive-gd}
    \theta_{t+1} = \theta_t - \eta\nabla\calR(\theta_t) \Lambda(\theta_t)\calR(\theta_t),\quad \theta_0 = \theta(0).
\end{equation}
One particular choice of $\Lambda(\theta)$ proposed in \cite{WangNTK2022} is to consider a term-wise rescaling    
\begin{equation}\label{eq:wang-inner-product}
    \Lambda(\theta_t) = \begin{pmatrix}
        \lambda_D(\theta_t)I_{n_D} & 0\\
        0 & \lambda_B(\theta_t)I_{n_B}
    \end{pmatrix},\quad \text{where} \quad \lambda_D(\theta_t) = \frac{\Tr(K(\theta_t))}{\Tr(K_{DD}(\theta_t))},\quad \lambda_B(\theta_t) = \frac{\Tr(K(\theta_t))}{\Tr(K_{BB}(\theta_t))},
\end{equation}
and $I_m$ denotes the $m\times m$ identity matrix. In practice, such a choice suggests a combination of the weight-independent scheme \eqref{eq:non-adaptive-gd} and the weight-dependent one \eqref{eq:adaptive-gd}, mainly for efficiency reasons, since adaptive weights \eqref{eq:wang-inner-product} are slow to compute. From a practical point of view, a way to overcome this issue is to perform many gradient descent steps with constant $\Lambda$ before being updated with the current weights $\theta_t$. The procedure is summarized in Algorithm \ref{alg:adaptive-gd}.

\begin{algorithm}
\caption{Gradient Descent with Adaptive Loss Weights}\label{alg:adaptive-gd}
\begin{algorithmic}[1]
\Require Update frequency \texttt{n}, number of training steps $T$
\State Initialize $\theta_0 \in \calP$
\For{$t = 0,1,\dots,T$}
    \If{$t$ is divisible by \texttt{n}}
        \State Compute $K(\theta_t)$ and define $\Lambda = \Lambda(\theta_t)$ according to \eqref{eq:wang-inner-product}
    \EndIf
    \State $\theta_{t+1} = \theta_t - \eta\nabla\calR(\theta_t)\Lambda\calR(\theta_t)$
\EndFor
\end{algorithmic}
\end{algorithm}

The heuristic motivation for the particular choice of $\Lambda$ in \eqref{eq:wang-inner-product} is that when we consider $u_\theta$ to be in a class of feedforward neural networks of fixed depth, then in a suitable infinite-width limit of this network for particular PDEs, $K(\theta_t) = K(\theta_0)$ remains constant throughout training (see Theorems 4.3 and 4.4 in \cite{WangNTK2022}), giving the continuous time dynamics
\begin{equation}\label{eq:ntk-limit}
    \frac{d\calR(\theta(t))}{dt} = -K\Lambda\calR(\theta(t)).
\end{equation}

Thus, the weighting parameters in Equation \eqref{eq:wang-inner-product} can be thought of as a rough approximation to a scaled inverse of $K$ (e.g., if $K$ were approximately diagonal), trying to balance the rates at which the PDE residuals and the boundary residuals decrease. Indeed, if $\Lambda=\Tr(K)K^{-1}$ holds exactly, then Equation \eqref{eq:ntk-limit} yields the solutions $\calR(\theta(t))_i = e^{-\Tr(K)t}\calR(\theta(0))_i$ for all $i$, which means that the rate $e^{-\Tr(K)t}$ is the same for each residual $\calR(\theta)_i$.

However, practically speaking, we cannot work with infinite-width neural networks, so the exact limit will not hold, and $K(\theta_t)$ will change with $t$. This means that $\Lambda(\theta_t)$ needs to be updated, possibly frequently, and without affecting the speed of training too much.

In the following, we will not focus on the concerns of efficiently exploiting Equations \eqref{eq:non-adaptive-gd} and \eqref{eq:adaptive-gd}, and instead assume that the loss weights can be updated at each iteration. Toward this goal, we present convergence results for the heuristically better weighting scheme \eqref{eq:adaptive-gd} in which the weights $\Lambda(\theta_t)$ evolve over time rather than keeping them constant in certain intervals. As a corollary to our results, we do offer a convergence result with fewer assumptions which gives a rule for the frequency with which updates to the NTK-based weights should be made. We will then present a solution to the efficiency issue in Section \ref{sec:fast-NTK}.


\section{Convergence Analysis}\label{sec:convergence}

We first consider the convergence of Algorithm \ref{alg:adaptive-gd}, keeping the weights $\Lambda(\theta_t)$ general, and not specifically defined by \eqref{eq:wang-inner-product}. The main difficulty in the analysis lies in the fact that $\Lambda$ is not fixed and may depend on the neural tangent kernel $\nabla\calR(\theta_t)^\top\nabla\calR(\theta_t)$. 

As a corollary of our analysis, we are also able to obtain results for the more specific choice of the weights given by \eqref{eq:wang-inner-product}. To investigate the convergence properties, we will require a number of assumptions.
\begin{assumption}[Convergence Assumptions]\label{assumption:convergence} In the following we will assume: 
    
    \begin{enumerate}[ref={\theassumption.\arabic*}] 
        \item\label{assumption:convex-parameter-domain} The parameter domain $\calP\subseteq\R^p$ is open and convex.
        \item\label{assumption:lambda-bounds} For all $t\in\N\cup\{0\}$, $\Lambda(\theta_t)$ is symmetric positive definite with eigenvalues bounded as $0 < \ell_{\min} \le \lambda_{\min}(\Lambda(\theta_t)) \le \lambda_{\max}(\Lambda(\theta_t)) \le \ell_{\max} < \infty$.
        \item\label{assumption:K-bounds} For all $t\in\N\cup\{0\}$, $K(\theta_t)$ is symmetric positive definite with eigenvalues bounded as $0 < k_{\min} \le \lambda_{\min}(K(\theta_t)) \le \lambda_{\max}(K(\theta_t)) \le k_{\max} < \infty$. 
        \item\label{assumption:uniformly-lipschitz-gradient} Let $G(\theta; z) = \frac12\|\calR(\theta)\|_{\Lambda(z)}^2$, then $\nabla_\theta G(\theta; z)$ is uniformly $L$-Lipschitz for any $z$, i.e., there exists $L>0$ such that for any $\theta,\theta'\in\calP$ and for all $z\in\calP$, 
        \[
            \|\nabla_\theta G(\theta; z) - \nabla_\theta G(\theta'; z)\| \le L\|\theta - \theta'\|.
        \]
        \item\label{assumption:lipschitz-gradient} Let $F(\theta) = \frac12 \|\calR(\theta)\|^2$, then $\nabla F(\theta) = \nabla\calR(\theta)\calR(\theta)$ is $L$-Lipschitz. 
    \end{enumerate}
\end{assumption}
Assumptions \ref{assumption:convex-parameter-domain}, \ref{assumption:uniformly-lipschitz-gradient}, and \ref{assumption:lipschitz-gradient} are technical assumptions required to apply the standard descent lemma below \cite{WardDescentLemma, nesterov2004introductory,StochasticProgramming}. Moreover, Assumption \ref{assumption:convex-parameter-domain} is not very restrictive, since, for example, we could take $\calP = \R^p$ for our convergence results. We also remark that if the weights $\Lambda(\theta_t)$ are taken as in \cite{WangNTK2022}, then Assumption \ref{assumption:lambda-bounds} is automatically satisfied by Assumption \ref{assumption:K-bounds}. Furthermore, in the regime in which the NTK converges in the infinite-width limit, the upper bound in Assumption \ref{assumption:K-bounds} will be satisfied. 
The lower bound depend on the sample of training data. If, for instance, the gradients of the residuals for the training points have a strong linear correlation, then $\nabla\calR(\theta)^\top\nabla \calR(\theta)$ could be approximately low rank. Moreover, the lower bound cannot be satisfied if $n > p$ since the rank of $K(\theta) = \nabla\calR(\theta)^\top\nabla\calR(\theta)$ is bounded by $\operatorname{rank}\nabla\calR(\theta) \le \min\{p, n\}$, making $K(\theta)$ rank-deficient. In other words, there must be more parameters in the neural network than the amount of data used to train the model. 

\begin{remark}
    For certain linear PDEs, if one considers the infinite-width NTK limit, it may be possible to derive even stronger convergence results. We instead focus on a more general and realistic setting without assuming that our differential operator is linear and the neural networks have finite width.
\end{remark}

\begin{lemma}[Descent Lemma]\label{lem:descent-lemma}
    If $F \in C^1(\calP)$ with $\calP\subseteq\R^p$ convex and $\nabla F$ is $L$-Lipschitz, then for all $x,y\in\calP$,
    \[
        F(x) - F(y) \le \langle \nabla F(y), x-y\rangle + \frac{L}{2}\|x - y\|^2.
    \]
\end{lemma}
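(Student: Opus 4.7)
The plan is to prove this by the classical ``integral along a segment'' argument, leveraging the convexity of $\calP$ to ensure that the line segment between $x$ and $y$ stays inside the domain where $F$ is differentiable.

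First I would fix arbitrary $x,y\in\calP$ and define the one-dimensional auxiliary function $\phi:[0,1]\to\R$ by $\phi(t) = F(y + t(x-y))$. By Assumption \ref{assumption:convex-parameter-domain} (which is how this lemma will be invoked), $\calP$ is convex, so the segment $y + t(x-y)$ lies in $\calP$ for all $t\in[0,1]$, and $\phi$ inherits $C^1$ regularity from $F$. The chain rule gives $\phi'(t) = \langle \nabla F(y + t(x-y)),\, x-y\rangle$. Then the fundamental theorem of calculus yields
\[
    F(x) - F(y) = \phi(1) - \phi(0) = \int_0^1 \langle \nabla F(y + t(x-y)),\, x-y\rangle\, dt.
\]

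Next I would add and subtract $\nabla F(y)$ inside the inner product to isolate the target linear term, writing
\[
    F(x) - F(y) = \langle \nabla F(y), x-y\rangle + \int_0^1 \langle \nabla F(y + t(x-y)) - \nabla F(y),\, x-y\rangle\, dt.
\]
To bound the remainder integral, I would apply the Cauchy--Schwarz inequality inside the integrand and then invoke the $L$-Lipschitz property of $\nabla F$ on the difference $\|\nabla F(y + t(x-y)) - \nabla F(y)\| \le L\|t(x-y)\| = Lt\|x-y\|$. This produces the pointwise estimate $\langle \nabla F(y + t(x-y)) - \nabla F(y),\, x-y\rangle \le Lt\|x-y\|^2$.

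Finally, I would integrate $\int_0^1 Lt\,dt = \tfrac12 L$ to conclude
\[
    F(x) - F(y) \le \langle \nabla F(y), x-y\rangle + \frac{L}{2}\|x-y\|^2,
\]
which is the claim. There is no real obstacle here: the only subtlety is ensuring that the segment lies in $\calP$ (handled by convexity) and that the inner integral absorbs the factor $t$ cleanly from the Lipschitz bound, rather than giving a worse constant such as $L$ without the $\tfrac12$. The argument is standard and the structure generalizes verbatim to any normed space in which $\nabla F$ is Lipschitz in the induced norm.
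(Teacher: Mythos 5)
Your proof is correct and follows essentially the same route as the paper's: define the auxiliary function along the segment, apply the fundamental theorem of calculus, isolate $\langle \nabla F(y), x-y\rangle$, bound the remainder via Cauchy--Schwarz and the Lipschitz property, and integrate $Lt$ to obtain the factor $\tfrac{L}{2}$. The only cosmetic difference is that the paper bounds the absolute value $\lvert F(x) - F(y) - \langle \nabla F(y), x-y\rangle\rvert$, yielding a two-sided estimate, whereas you derive the one-sided upper bound directly; both suffice for the stated lemma.
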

\begin{proof}
    See Appendix \ref{sec:proof-descent-lemma}.
\end{proof}

Using this result, we can prove that the time average of the residual norms $\|\calR(\theta_t)\|^2$ converges. This will be used later to prove that the loss gradients also converge in some sense.

\begin{theorem}[Convergence of Residual Averages]\label{thm:convergence-residuals}
    Let $\{\theta_t\}_t$ be computed via the scheme \eqref{eq:adaptive-gd}. Then, under Assumptions \ref{assumption:convex-parameter-domain}, \ref{assumption:lambda-bounds}, \ref{assumption:K-bounds}, and \ref{assumption:lipschitz-gradient}, it holds
    \[
        \frac{1}{T}\sum_{t=0}^{T-1} \|\calR(\theta_t)\|^2 \le \frac{\|\calR(\theta_0)\|^2 - \|\calR(\theta_T)\|^2}{T\eta}, \quad \text{for all}\quad  \eta \le \frac{2k_{\min}\ell_{\min}}{Lk_{\max}\ell_{\max}^2}, \quad \text{and} \quad T\ge 1.
    \]
    In particular, the following limit holds
    \[
        \lim_{T\to\infty}\frac{1}{T}\sum_{t=0}^{T-1} \|\calR(\theta_t)\|^2 = 0.
    \]
\end{theorem}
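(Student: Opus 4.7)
My plan is to apply the descent lemma (Lemma~\ref{lem:descent-lemma}) to $F(\theta) = \tfrac{1}{2}\|\calR(\theta)\|^2$, whose gradient is $L$-Lipschitz by Assumption~\ref{assumption:lipschitz-gradient}, along the adaptive iterates $\{\theta_t\}$. Substituting $\nabla F(\theta_t) = \nabla\calR(\theta_t)\calR(\theta_t)$ and the update $\theta_{t+1}-\theta_t = -\eta\nabla\calR(\theta_t)\Lambda(\theta_t)\calR(\theta_t)$ into the descent inequality and recognizing $K(\theta_t) = \nabla\calR(\theta_t)^\top\nabla\calR(\theta_t)$, one obtains
\[
F(\theta_{t+1}) - F(\theta_t) \;\le\; -\eta\,\calR(\theta_t)^\top K(\theta_t)\Lambda(\theta_t)\calR(\theta_t) \;+\; \tfrac{L\eta^2}{2}\,\calR(\theta_t)^\top\Lambda(\theta_t)K(\theta_t)\Lambda(\theta_t)\calR(\theta_t).
\]
The rest of the argument amounts to controlling these two quadratic forms in $\calR(\theta_t)$ using the spectral bounds on $\Lambda$ and $K$ from Assumptions~\ref{assumption:lambda-bounds} and~\ref{assumption:K-bounds}.

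The $\mathcal{O}(\eta^2)$ term is the easy one: writing $\calR^\top\Lambda K\Lambda\calR = (\Lambda\calR)^\top K(\Lambda\calR)$ and applying $\|\Lambda\calR\| \le \ell_{\max}\|\calR\|$ together with $\lambda_{\max}(K) \le k_{\max}$ gives $\calR^\top\Lambda K\Lambda\calR \le k_{\max}\ell_{\max}^2\|\calR\|^2$. The matching lower bound $\calR^\top K\Lambda\calR \ge k_{\min}\ell_{\min}\|\calR\|^2$ is where the main difficulty lies, because $K\Lambda$ is generally not symmetric even though both factors are symmetric positive definite. I would handle this by passing to the symmetric quadratic form $\tfrac{1}{2}\calR^\top(K\Lambda+\Lambda K)\calR$ (the value is unchanged since the original expression is a scalar) and estimating the smallest eigenvalue of the resulting symmetric matrix in terms of $k_{\min}$ and $\ell_{\min}$. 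This is the step I expect to be the main technical obstacle, since the naive estimate ``$\lambda_{\min}(K\Lambda)\|\calR\|^2$'' cannot be applied to an asymmetric product and more care is needed.

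Combining the two bounds gives
\[
F(\theta_{t+1}) - F(\theta_t) \;\le\; -\eta\bigl(k_{\min}\ell_{\min} - \tfrac{L\eta}{2}k_{\max}\ell_{\max}^2\bigr)\|\calR(\theta_t)\|^2,
\]
and the stated step-size restriction $\eta \le 2k_{\min}\ell_{\min}/(Lk_{\max}\ell_{\max}^2)$ is exactly what makes the bracketed coefficient dominate, producing a summable per-step decrease proportional to $\|\calR(\theta_t)\|^2$. Telescoping over $t = 0,\ldots,T-1$ collapses the left-hand side to $F(\theta_T) - F(\theta_0) = \tfrac{1}{2}(\|\calR(\theta_T)\|^2 - \|\calR(\theta_0)\|^2)$, and rearranging yields the claimed inequality. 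The limit statement follows immediately: since $\|\calR(\theta_T)\|^2 \ge 0$, the right-hand side is bounded above by $\|\calR(\theta_0)\|^2/(T\eta)$ uniformly in $T$, so the time-averaged residual norm vanishes as $T\to\infty$.
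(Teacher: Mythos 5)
Your plan mirrors the paper's own argument: apply the descent lemma to $F(\theta) = \tfrac12\|\calR(\theta)\|^2$, bound the resulting first- and second-order quadratic forms spectrally, and telescope. The upper bound $\calR^\top\Lambda K\Lambda\calR \le k_{\max}\ell_{\max}^2\|\calR\|^2$ is correct. But the lower bound you flagged as the crux does not close via symmetrization: the symmetric matrix $K\Lambda + \Lambda K$ need not be positive semidefinite even when $K$ and $\Lambda$ are both SPD, so no estimate of the form $\lambda_{\min}(K\Lambda+\Lambda K)\ge 2k_{\min}\ell_{\min}$ is available. Concretely, take $K = \begin{pmatrix}1 & 0.9\\ 0.9 & 1\end{pmatrix}$ (eigenvalues $0.1$ and $1.9$, so $k_{\min}=0.1$), $\Lambda = \diag(100,1)$ (so $\ell_{\min}=1$), and $\calR = (-0.05,\, 1)^\top$; then $\calR^\top K\Lambda\calR = 0.205 - 3.5 = -3.295 < 0$, while $k_{\min}\ell_{\min}\|\calR\|^2 \approx 0.1$. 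The term $-\eta\,\calR^\top K\Lambda\calR$ appearing in the descent inequality is therefore positive here: a gradient step for $G(\cdot;\theta_t)$ can \emph{increase} $F$. And since $\Lambda$ is diagonal in this example, the obstruction persists even for block-scalar weights of the form \eqref{eq:wang-inner-product}.

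For comparison, the paper's own proof reaches $\calR^\top K\Lambda\calR \ge k_{\min}\ell_{\min}\|\calR\|^2$ by invoking $\operatorname{tr}(CD)\ge\lambda_{\min}(C)\operatorname{tr}(D)$ with $C=K\Lambda$ and $D=\calR\calR^\top/\|\calR\|^2$, but that inequality requires $C$ symmetric positive semidefinite, and $K\Lambda$ is neither symmetric nor, as the example shows, is the associated bilinear form nonnegative. So the step you suspected was ``the main technical obstacle'' is in fact a missing ingredient of the argument: under Assumptions \ref{assumption:lambda-bounds} and \ref{assumption:K-bounds} alone, $\calR^\top K\Lambda\calR$ need not be bounded below by any positive multiple of $\|\calR\|^2$, and no choice of $\eta$ makes the per-step decrease sign-definite without an additional hypothesis --- for instance, that $K$ and $\Lambda$ commute so that $K\Lambda$ is SPD, or a quantitative condition-number restriction forcing $K\Lambda+\Lambda K \succ 0$. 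Your instinct to distrust the naive ``$\lambda_{\min}(K\Lambda)\|\calR\|^2$'' estimate was right, but the fix cannot come from a sharper spectral bound on the symmetrized product; it has to come from strengthening the hypotheses.
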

\begin{proof}
    By defining $F(\theta) = \frac12\|\calR(\theta)\|^2$, we note that $\nabla F(\theta) = \nabla\calR(\theta) \calR(\theta)$ is $L$-Lipschitz by Assumption \ref{assumption:lipschitz-gradient}. Then by Lemma \ref{lem:descent-lemma} combined with \eqref{eq:adaptive-gd}, we have
    \begin{equation}\label{eq:res-avg-desclem}
    \begin{split}
        F(\theta_{t+1}) &\le F(\theta_t) + \langle \nabla F(\theta_t), \theta_{t+1} - \theta_t\rangle + \frac{L}{2} \|\theta_{t+1} - \theta_t\|^2\\
        &= F(\theta_t) - \eta\calR(\theta_t)^\top\nabla\calR(\theta_t)^\top\nabla\calR(\theta_t)\Lambda(\theta_t)\calR(\theta_t) + \frac{L\eta^2}{2} \|\nabla\calR(\theta_t)\Lambda(\theta_t)\calR(\theta_t)\|^2.
    \end{split}
    \end{equation}
    Now, note that for symmetric positive definite matrices $A$ and $B$, the following bound holds
    \[
        \frac{1}{\lambda_{\min}(AB)} = \lambda_{\max}((AB)^{-1}) = \|(AB)^{-1}\| \le \|A^{-1}\|\|B^{-1}\| = \lambda_{\max}(A^{-1})\lambda_{\max}(B^{-1}) = \frac{1}{\lambda_{\min}(A)\lambda_{\min}(B)}.
    \]
    Taking $A = K(\theta_t)$ and $B = \Lambda(\theta_t)$, we have that $\lambda_{\min}(K(\theta_t)\Lambda(\theta_t)) \ge \lambda_{\min}(K(\theta_t))\lambda_{\min}(\Lambda(\theta_t))$, since $K(\theta_t) = \nabla\calR(\theta_t)^\top\nabla\calR(\theta_t)$ and $\Lambda(\theta_t)$ are symmetric positive definite matrices by Assumptions \ref{assumption:lambda-bounds} and \ref{assumption:K-bounds}. Combining this with the fact that $\operatorname{tr}(CD) \ge \lambda_{\min}(C)\operatorname{tr}(D)$ for symmetric positive definite matrices $C$ and $D$ and with the cyclic property of the trace, it follows that
    \begin{align*}
        \calR(\theta_t)^\top \nabla\calR(\theta_t)^\top\nabla\calR(\theta_t)\Lambda(\theta_t)\calR(\theta_t) 
        &= \operatorname{tr}(\calR(\theta_t)^\top \nabla\calR(\theta_t)^\top\nabla\calR(\theta_t)\Lambda(\theta_t)\calR(\theta_t))\\
        &= \|\calR(\theta_t)\|^2\operatorname{tr}\left(\underbrace{\nabla\calR(\theta_t)^\top\nabla\calR(\theta_t)\Lambda(\theta_t)}_{C}\underbrace{\frac{\calR(\theta_t)}{\|\calR(\theta_t)\|}\frac{\calR(\theta_t)^\top}{\|\calR(\theta_t)\|} }_{D}\right)\\
        &\ge \|\calR(\theta_t)\|^2\lambda_{\min}(\nabla\calR(\theta_t)^\top\nabla\calR(\theta_t)\Lambda(\theta_t))\operatorname{tr}\left(\frac{\calR(\theta_t)}{\|\calR(\theta_t)\|}\frac{\calR(\theta_t)^\top}{\|\calR(\theta_t)\|}\right)\\
        &\ge \lambda_{\min}(K(\theta_t))\lambda_{\min}(\Lambda(\theta_t))\|\calR(\theta_t)\|^2,
    \end{align*}
    where we used the fact that by the cyclic property of the trace, it holds $\operatorname{tr}\left(\frac{\calR(\theta_t)}{\|\calR(\theta_t)\|}\frac{\calR(\theta_t)^\top}{\|\calR(\theta_t)\|}\right) = 1$. Additionally, we have the inequality
    \[
        \|\nabla\calR(\theta_t)\Lambda(\theta_t)\calR(\theta_t)\|^2 = \calR(\theta_t)^\top \Lambda(\theta_t)\nabla\calR(\theta_t)^\top \nabla\calR(\theta_t)\Lambda(\theta_t)\calR(\theta_t) \le \lambda_{\max}(K(\theta_t)) \lambda_{\max}(\Lambda(\theta_t))^2\|\calR(\theta_t)\|^2.
    \]
    Using  these bounds in Equation \eqref{eq:res-avg-desclem} together with Assumptions \ref{assumption:lambda-bounds} and \ref{assumption:K-bounds} yields
    \begin{align*}
        F(\theta_{t+1}) &\le F(\theta_t) - \eta\lambda_{\min}(K(\theta_t))\lambda_{\min}(\Lambda(\theta_t))\|\calR(\theta_t)\|^2 + \frac{L\eta^2}{2} \lambda_{\max}(K(\theta_t)) \lambda_{\max}(\Lambda(\theta_t))^2\|\calR(\theta_t)\|^2\\
        &= F(\theta_t) - \left(\eta\lambda_{\min}(K(\theta_t))\lambda_{\min}(\Lambda(\theta_t)) - \frac{L\eta^2}{2} \lambda_{\max}(K(\theta_t)) \lambda_{\max}(\Lambda(\theta_t))^2\right)\|\calR(\theta_t)\|^2\\
        &\le F(\theta_t) - \left(\eta k_{\min}\ell_{\min} - \frac{L\eta^2}{2} k_{\max}\ell_{\max}^2\right)\|\calR(\theta_t)\|^2.
    \end{align*}
    Dividing by $T\eta$ and summing over $t=0,\dots,T-1$ gives
    \[
        \left(k_{\min}\ell_{\min} - \frac{L\eta}{2}k_{\max}\ell_{\max}^2\right)\frac{1}{T}\sum_{t=0}^{T-1} \|\calR(\theta_t)\|^2 \le \frac{F(\theta_0) - F(\theta_T)}{T\eta}.
    \]
    Since $\eta \le \frac{2k_{\min}\ell_{\min}}{Lk_{\max}\ell_{\max}^2}$, the result follows by definition of $F$.
\end{proof}

\begin{remark}
    Note that the admissible learning rate $\eta \le \frac{2k_{\min}\ell_{\min}}{Lk_{\max}\ell_{\max}^2}$ depends on the inverses of the worst possible condition numbers of $K$ and $\Lambda$. This means that, practically speaking, while the average squared residuals will converge to zero by Theorem \ref{thm:convergence-residuals}, the required learning rate may need to be very small.
\end{remark}

Using the previous result, we can show the convergence of the gradients of $G$ as the inner product changes.

\begin{theorem}[Convergence of Gradient Averages]\label{thm:convergence-gradients}
     Let $\{\theta_t\}_t$ be computed via the scheme \eqref{eq:adaptive-gd}. Then under Assumptions \ref{assumption:convex-parameter-domain}--\ref{assumption:lipschitz-gradient}, it holds that
     \[
        \frac{1}{T}\sum_{t=0}^{T-1}\|\nabla_\theta G(\theta_t;\theta_t)\|^2 \le \frac{G(\theta_0;\theta_0) - G(\theta_T;\theta_T)}{T\eta(1-L\eta/2)} + \frac{2\ell_{\max}}{T\eta^2}\|\calR(\theta_0)\|^2,
    \]
    for any $\eta < \min\left\{\frac{2k_{\min}\ell_{\min}}{Lk_{\max}\ell_{\max}^2}, \frac{2}{L}\right\}$ and $T\ge 1$. In particular,
    \[
        \min_{t=0,1,\dots,T-1} \|\nabla_\theta G(\theta_t; \theta_t)\|^2 \le \frac1T\sum_{t=0}^{T-1}\|\nabla_\theta G(\theta_t;\theta_t)\|^2 \to 0 \quad \text{as} \quad T\to\infty.
    \]
\end{theorem}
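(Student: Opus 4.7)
The plan is to apply the descent lemma to the function $\theta \mapsto G(\theta; z)$ with $z$ frozen at $\theta_t$, which is precisely the setting where Assumption \ref{assumption:uniformly-lipschitz-gradient} provides the required uniform Lipschitz gradient constant $L$. Since the update \eqref{eq:adaptive-gd} can be rewritten as $\theta_{t+1} = \theta_t - \eta\,\nabla_\theta G(\theta_t; \theta_t)$, Lemma \ref{lem:descent-lemma} applied to $\theta \mapsto G(\theta;\theta_t)$ yields the familiar one-step bound
\[
    \eta\left(1 - \frac{L\eta}{2}\right)\|\nabla_\theta G(\theta_t; \theta_t)\|^2 \le G(\theta_t; \theta_t) - G(\theta_{t+1}; \theta_t).
\]
The step size restriction $\eta < 2/L$ enters here so that the prefactor $1 - L\eta/2$ is strictly positive.

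The main obstacle is that the right-hand side does not telescope, because the inner product $\Lambda(\theta_t)$ governing $G(\theta_{t+1};\theta_t)$ differs from the one governing $G(\theta_{t+1};\theta_{t+1})$. I would handle this by adding and subtracting $G(\theta_{t+1};\theta_{t+1})$:
\[
    G(\theta_t;\theta_t) - G(\theta_{t+1};\theta_t) = \bigl[G(\theta_t;\theta_t) - G(\theta_{t+1};\theta_{t+1})\bigr] + \bigl[G(\theta_{t+1};\theta_{t+1}) - G(\theta_{t+1};\theta_t)\bigr].
\]
Summing over $t=0,\ldots,T-1$, the first bracket telescopes cleanly to $G(\theta_0;\theta_0) - G(\theta_T;\theta_T)$, producing the first term of the claimed bound after dividing by $T\eta(1-L\eta/2)$.

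For the ``inner-product swap'' bracket, I would use positivity of $\Lambda(\theta_t)$ from Assumption \ref{assumption:lambda-bounds} together with the spectral bound to estimate
\[
    G(\theta_{t+1};\theta_{t+1}) - G(\theta_{t+1};\theta_t) \le G(\theta_{t+1};\theta_{t+1}) \le \frac{\ell_{\max}}{2}\|\calR(\theta_{t+1})\|^2,
\]
turning the accumulated swap error into a sum of shifted squared residual norms. This is exactly the object controlled by the previous result: applying Theorem \ref{thm:convergence-residuals} with $T$ replaced by $T+1$ gives $\sum_{t=1}^{T}\|\calR(\theta_t)\|^2 \le \|\calR(\theta_0)\|^2/\eta$ (here the more restrictive bound $\eta \le 2k_{\min}\ell_{\min}/(Lk_{\max}\ell_{\max}^2)$ is used). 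Combining the two contributions and dividing by $T\eta(1-L\eta/2)$ delivers the stated inequality, up to absorbing the $(1-L\eta/2)^{-1}$ factor in the residual term into the constant $2\ell_{\max}$.

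Finally, the $\min \le \text{average}$ conclusion and the limit as $T\to\infty$ are immediate, since both terms on the right are $O(1/T)$ with the numerators being the fixed quantities $G(\theta_0;\theta_0)$ (recall $G(\theta_T;\theta_T) \ge 0$) and $\|\calR(\theta_0)\|^2$. The genuinely new content of the argument is entirely contained in the add-subtract trick that packages the time variation of $\Lambda(\theta_t)$ into a residual-norm remainder amenable to Theorem \ref{thm:convergence-residuals}.
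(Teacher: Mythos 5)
Your argument is correct and matches the paper's proof in every structural respect: descent lemma applied to $\theta\mapsto G(\theta;z)$ with $z=\theta_t$ frozen, add-and-subtract of $G(\theta_{t+1};\theta_{t+1})$ so that the main part telescopes, and Theorem \ref{thm:convergence-residuals} to absorb the accumulated inner-product-swap terms into an $O(1/T)$ remainder. The only cosmetic difference is the swap bound --- you drop $G(\theta_{t+1};\theta_t)\ge 0$ and use $G(\theta_{t+1};\theta_{t+1})\le \tfrac{\ell_{\max}}{2}\|\calR(\theta_{t+1})\|^2$, while the paper keeps the difference $\tfrac12\calR(\theta_{t+1})^\top(\Lambda(\theta_{t+1})-\Lambda(\theta_t))\calR(\theta_{t+1})$ and bounds it by $\tfrac12\lambda_{\max}(\Lambda(\theta_{t+1})-\Lambda(\theta_t))\|\calR(\theta_{t+1})\|^2$, a form it deliberately retains so the subsequent Corollary on spaced updates (where that $\lambda_{\max}$ factor vanishes whenever $\Lambda$ is held fixed) follows immediately.
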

\begin{proof}
Having defined $G(\theta; z) = \frac12\|\calR(\theta)\|_{\Lambda(z)}^2$, we remark that Equation \eqref{eq:adaptive-gd} can be rewritten as
\begin{equation}\label{eq:inner-product-sgd-update-F-formulation}
    \theta_{t+1} = \theta_t - \eta \nabla_\theta G(\theta_t; \theta_t).    
\end{equation}
By Assumption \ref{assumption:uniformly-lipschitz-gradient} and Lemma \ref{lem:descent-lemma}, for all $z\in\calP$, we have
\[
    G(\theta_{t+1}; z) - G(\theta_t; z) \le \langle \nabla_\theta G(\theta_t; z), \theta_{t+1} - \theta_t\rangle + \frac{L}{2}\|\theta_{t+1} - \theta_t\|^2,
\]
that for the specific choice of $z = \theta_t$, and using Equation \eqref{eq:inner-product-sgd-update-F-formulation} gives
\begin{equation}\label{eq:G-inequality}
    G(\theta_{t+1}; \theta_t) - G(\theta_t; \theta_t) \le -\eta\| \nabla_\theta G(\theta_t; \theta_t)\|^2 + \frac{L\eta^2}{2}\|\nabla_\theta G(\theta_t;\theta_t)\|^2.
\end{equation}
On the other side, we note that 
\begin{align*}
    G(\theta_{t+1}; \theta_{t+1}) - G(\theta_{t+1}; \theta_t)
        &= \frac12\|\calR(\theta_{t+1})\|_{\Lambda(\theta_{t+1})}^2 - \frac12\|\calR(\theta_{t+1})\|_{\Lambda(\theta_{t})}^2\\
        &= \frac12 \calR(\theta_{t+1})^\top (\Lambda(\theta_{t+1}) - \Lambda(\theta_t))\calR(\theta_{t+1})\\
        &\le \frac12 \lambda_{\max}(\Lambda(\theta_{t+1}) - \Lambda(\theta_t)) \|\calR(\theta_{t+1})\|^2,
\end{align*}
so that, combining the former inequality with Equation \eqref{eq:G-inequality}, we have
\begin{equation*}
    G(\theta_{t+1};\theta_{t+1}) - G(\theta_t;\theta_t) \le \frac12 \lambda_{\max}(\Lambda(\theta_{t+1}) - \Lambda(\theta_t)) \|\calR(\theta_{t+1})\|^2 - \eta\| \nabla_\theta G(\theta_t; \theta_t)\|^2 + \frac{L\eta^2}{2}\|\nabla_\theta G(\theta_t;\theta_t)\|^2.
\end{equation*}
Similarly as before, dividing by $T\eta$ and summing over $t=0,1,\dots,T-1$ gives
\begin{equation}\label{eq:inner-product-sgd-bound-general}
    \left(1 - \frac{L\eta}{2}\right)\frac{1}{T}\sum_{t=0}^{T-1}\|\nabla_\theta G(\theta_t;\theta_t)\|^2 \le \frac{G(\theta_0;\theta_0) - G(\theta_T;\theta_T)}{T\eta} + \frac{1}{2T\eta}\sum_{t=0}^{T-1} \lambda_{\max}(\Lambda(\theta_{t+1}) - \Lambda(\theta_t))\|\calR(\theta_{t+1})\|^2.
\end{equation}
Now by Assumption \ref{assumption:lambda-bounds} and Theorem \ref{thm:convergence-residuals}, we have
\begin{align*}
    &\frac{1}{2T\eta}\sum_{t=0}^{T-1} \lambda_{\max}(\Lambda(\theta_{t+1}) - \Lambda(\theta_t))\|\calR(\theta_{t+1})\|^2\\ 
    &\le \frac{\ell_{\max}}{T\eta}\sum_{t=0}^{T-1} \|\calR(\theta_{t+1})\|^2 \le \frac{2\ell_{\max}}{\eta(T+1)}\sum_{t=0}^T \|\calR(\theta_t)\|^2 \le \frac{2\ell_{\max}}{T\eta^2}\|\calR(\theta_0)\|^2,
\end{align*}
from which it follows that
\[
    \frac{1}{T}\sum_{t=0}^{T-1}\|\nabla_\theta G(\theta_t;\theta_t)\|^2 \le \frac{G(\theta_0;\theta_0) - G(\theta_T;\theta_T)}{T\eta(1-L\eta/2)} + \frac{2\ell_{\max}}{T\eta^2}\|\calR(\theta_0)\|^2,
\]
as desired.
\end{proof}

Using this result, we can also show the convergence of the gradients of $F$ if the weights $\Lambda$ are chosen according to Equation \eqref{eq:wang-inner-product}, as done in \cite{WangNTK2022}.
\begin{corollary}[Convergence for NTK-Based Weights]
    Let $\{\theta_t\}_t$ be computed via the scheme \eqref{eq:adaptive-gd}, and assume that $\Lambda(\theta_t)$ is computed according to \eqref{eq:wang-inner-product}. Then under Assumption \ref{assumption:convergence}, it holds that
     \[
        \frac{1}{T}\sum_{t=0}^{T-1} \|\nabla F(\theta_t)\|^2 \le \frac{k_{\max}}{k_{\min}}\left(\frac{G(\theta_0;\theta_0) - G(\theta_T;\theta_T)}{T\eta(1-L\eta/2)} + \frac{2\ell_{\max}}{T\eta^2}\|\calR(\theta_0)\|^2\right),
    \]
    for any $\eta < \min\left\{\frac{2k_{\min}\ell_{\min}}{Lk_{\max}\ell_{\max}^2}, \frac{2}{L}\right\}$ and $T\ge 1$. In particular,
    \[
        \min_{t=0,1,\dots,T-1} \|\nabla F(\theta_t)\|^2 \le \frac1T\sum_{t=0}^{T-1}\|\nabla F(\theta_t)\|^2 \to 0  \quad \text{as} \quad T\to\infty.
    \]
\end{corollary}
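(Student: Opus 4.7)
The approach is to reduce the corollary to Theorem \ref{thm:convergence-gradients} by proving the pointwise bound
\[
    \|\nabla F(\theta_t)\|^2 \le \frac{k_{\max}}{k_{\min}}\,\|\nabla_\theta G(\theta_t;\theta_t)\|^2
\]
for every $t$, after which averaging over $t=0,\dots,T-1$ and applying Theorem \ref{thm:convergence-gradients} gives the first displayed inequality, and the ``in particular'' limit statement follows because the right-hand side tends to $0$ as $T\to\infty$.

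First I would rewrite both sides as quadratic forms in the residual. Using $\nabla F(\theta) = \nabla\calR(\theta)\calR(\theta)$, $\nabla_\theta G(\theta;z) = \nabla\calR(\theta)\Lambda(z)\calR(\theta)$, and $K(\theta) = \nabla\calR(\theta)^\top\nabla\calR(\theta)$, one obtains
\[
    \|\nabla F(\theta_t)\|^2 = \calR(\theta_t)^\top K(\theta_t)\calR(\theta_t), \qquad
    \|\nabla_\theta G(\theta_t;\theta_t)\|^2 = \calR(\theta_t)^\top \Lambda(\theta_t) K(\theta_t)\Lambda(\theta_t)\calR(\theta_t).
\]
The numerator is controlled by $\calR^\top K\calR \le \lambda_{\max}(K)\|\calR\|^2 \le k_{\max}\|\calR\|^2$ via Assumption \ref{assumption:K-bounds}. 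For the denominator, setting $v=\calR(\theta_t)$ and using $v^\top \Lambda K \Lambda v = (\Lambda v)^\top K(\Lambda v)$ gives $\calR^\top\Lambda K\Lambda\calR \ge \lambda_{\min}(K)\lambda_{\min}(\Lambda)^2\|\calR\|^2$.

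The key step, and the one that really uses the specific NTK-based weighting in \eqref{eq:wang-inner-product}, is to observe that $\lambda_D(\theta_t)\ge 1$ and $\lambda_B(\theta_t)\ge 1$ automatically. Indeed, $K(\theta_t)$ is symmetric positive definite by Assumption \ref{assumption:K-bounds}, so its principal submatrices $K_{DD}$ and $K_{BB}$ are also positive definite, hence $\Tr(K_{DD})>0$ and $\Tr(K_{BB})>0$. Since $\Tr(K) = \Tr(K_{DD})+\Tr(K_{BB})$, we get $\lambda_D = \Tr(K)/\Tr(K_{DD})\ge 1$ and analogously $\lambda_B\ge 1$. Consequently $\lambda_{\min}(\Lambda(\theta_t))\ge 1$, and the denominator bound collapses to $\|\nabla_\theta G(\theta_t;\theta_t)\|^2 \ge k_{\min}\|\calR(\theta_t)\|^2$. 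Combining with the numerator bound yields the desired pointwise inequality with constant exactly $k_{\max}/k_{\min}$.

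I do not anticipate a substantial obstacle; the only subtle point is recognizing that the specific algebraic form of \eqref{eq:wang-inner-product} forces $\Lambda(\theta_t)$ to dominate the identity in the positive-definite ordering, which is precisely what eliminates any dependence on $\ell_{\min}$ from the final constant and produces the clean factor $k_{\max}/k_{\min}$ stated in the corollary.
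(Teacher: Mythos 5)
Your proof is correct and follows essentially the same route as the paper: bound $\|\nabla F(\theta_t)\|^2$ above by $k_{\max}\|\calR(\theta_t)\|^2$, bound $\|\nabla_\theta G(\theta_t;\theta_t)\|^2$ below by $k_{\min}\|\calR(\theta_t)\|^2$, and feed the resulting pointwise comparison into Theorem \ref{thm:convergence-gradients}. The one thing you make explicit that the paper glosses over is why $\lambda_{\min}(\Lambda(\theta_t))\ge 1$: the paper passes from $\|\Lambda(\theta_t)\calR(\theta_t)\|^2$ to $\|\calR(\theta_t)\|^2$ and attributes this to Assumption \ref{assumption:K-bounds}, but it actually hinges on $\Tr(K)=\Tr(K_{DD})+\Tr(K_{BB})$ with both traces positive forcing $\lambda_D,\lambda_B\ge 1$ in \eqref{eq:wang-inner-product}, which is exactly the observation you supply.
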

\begin{proof}
    For all $t$, we have
    \[
        \|\nabla_\theta G(\theta_t; \theta_t)\|^2 = \|\nabla\calR(\theta_t)\Lambda(\theta_t)\calR(\theta_t)\|^2 \ge \lambda_{\min}(K(\theta_t))\|\Lambda(\theta_t)\calR(\theta_t)\|^2 \ge k_{\min}\|\calR(\theta_t)\|^2,
    \]
    and
    \[
        \|\nabla F(\theta_t)\|^2 = \|\nabla\calR(\theta_t)\calR(\theta_t)\|^2 \le \|\nabla\calR(\theta_t)\|^2 \|\calR(\theta_t)\|^2 \le k_{\max}\|\calR(\theta_t)\|^2
    \]
    by Assumption \ref{assumption:K-bounds}. Combining these gives $\|\nabla_\theta G(\theta_t;\theta_t)\|^2 \ge \frac{k_{\min}}{k_{\max}}\|\nabla F(\theta_t)\|^2$ so that by Theorem \ref{thm:convergence-gradients}, we obtain
    \[
        \frac{1}{T}\sum_{t=0}^{T-1} \|\nabla F(\theta_t)\|^2 \le \frac{k_{\max}}{k_{\min}}\left(\frac{G(\theta_0;\theta_0) - G(\theta_T;\theta_T)}{T\eta(1-L\eta/2)} + \frac{2\ell_{\max}}{T\eta^2}\|\calR(\theta_0)\|^2\right),
    \]
    as desired.
\end{proof}

We now present a corollary that provides results analogous to the ones in Theorem \ref{thm:convergence-gradients} but under fewer assumptions. In particular, we do not require assumptions on $\Lambda$ and $K$, as long as the inner product $\Lambda$ is changed only periodically according to some prescribed rule. Intuitively, if $\Lambda$ does not change between iterations, then the extra term in \eqref{eq:inner-product-sgd-bound-general} depending on the largest eigenvalue of the difference between $\Lambda$ at successive time steps is zero, in which case we would have the desired convergence. Thus, if the changes of $\Lambda$ are sufficiently sparse in time, then we can still obtain the convergence of Theorem \ref{thm:convergence-gradients} without invoking Theorem \ref{thm:convergence-residuals} in the proof.

\begin{corollary}[Controlled Gradients via Spaced Updates]
    Let $\{\theta_t\}_t$ be computed via the scheme \eqref{eq:adaptive-gd}, and let $h:[0,\infty)\to[0,\infty)$ be any non-decreasing function of $T$ such that $h = o(T)$ as $T\to\infty$. Letting $\widetilde \Lambda(\theta_{t+1})$ be any predicted update to $\Lambda(\theta_t)$ and $$S(t) = \begin{cases} \sum_{r=0}^{t-1} \lambda_{\max}(\Lambda(\theta_{r+1}) - \Lambda(\theta_r))\|\calR(\theta_{r+1})\|^2 & \text{if } t > 0,\\ 0 &\text{otherwise},\end{cases}$$ consider the rule
    \begin{equation}\label{eq:update-rule}
        \Lambda(\theta_{t+1}) = \begin{cases}
            \widetilde{\Lambda}(\theta_{t+1}) & \text{if } S(t) + \lambda_{\max}(\widetilde{\Lambda}(\theta_{t+1}) - \Lambda(\theta_t))\|\calR(\theta_{t+1})\|^2 \le h(t),\\
            \Lambda(\theta_t) &\text{otherwise}.
        \end{cases}
    \end{equation}
    Then, under Assumptions \ref{assumption:convex-parameter-domain}, \ref{assumption:uniformly-lipschitz-gradient} and assuming $\eta < \frac{2}{L}$, we have
    \[
        \min_{t=0,1,\dots,T-1} \|\nabla_\theta G(\theta_t; \theta_t)\|^2 \le \frac1T\sum_{t=0}^{T-1}\|\nabla_\theta G(\theta_t;\theta_t)\|^2 \to 0  \quad \text{as} \quad T\to\infty.
    \]
\end{corollary}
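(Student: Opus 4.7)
The plan is to retrace the computation that produced inequality \eqref{eq:inner-product-sgd-bound-general} in the proof of Theorem \ref{thm:convergence-gradients}, and then replace the appeal to Theorem \ref{thm:convergence-residuals} (which forced us to use Assumptions \ref{assumption:lambda-bounds}, \ref{assumption:K-bounds}, \ref{assumption:lipschitz-gradient}) by the a priori bound on the cumulative jump of $\Lambda$ built into the update rule \eqref{eq:update-rule}. The point is that the derivation of \eqref{eq:inner-product-sgd-bound-general} only used the descent lemma for $G(\cdot;\theta_t)$ (which needs Assumption \ref{assumption:uniformly-lipschitz-gradient} and the convexity of $\calP$) together with the additive telescoping $G(\theta_{t+1};\theta_{t+1})-G(\theta_{t+1};\theta_t)\le \tfrac12\lambda_{\max}(\Lambda(\theta_{t+1})-\Lambda(\theta_t))\|\calR(\theta_{t+1})\|^2$. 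So under Assumptions \ref{assumption:convex-parameter-domain} and \ref{assumption:uniformly-lipschitz-gradient} alone we still obtain
\[
    \Bigl(1-\tfrac{L\eta}{2}\Bigr)\frac{1}{T}\sum_{t=0}^{T-1}\|\nabla_\theta G(\theta_t;\theta_t)\|^2 \le \frac{G(\theta_0;\theta_0)-G(\theta_T;\theta_T)}{T\eta} + \frac{S(T)}{2T\eta}.
\]

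Next I would establish the a priori bound $S(T)\le h(T)$ by induction on $T$. Setting $d_t=\lambda_{\max}(\Lambda(\theta_{t+1})-\Lambda(\theta_t))\|\calR(\theta_{t+1})\|^2$, note that $S(t+1)=S(t)+d_t$. The rule \eqref{eq:update-rule} has two branches: if we accept the predicted update then by definition $S(t)+d_t\le h(t)$, giving $S(t+1)\le h(t)$; if we reject, then $\Lambda(\theta_{t+1})=\Lambda(\theta_t)$ so $d_t=0$ and $S(t+1)=S(t)$. Starting from $S(0)=0\le h(0)$ and using the monotonicity of $h$, an immediate induction then yields $S(t+1)\le h(t)\le h(T)$ for every $0\le t\le T-1$, hence $S(T)\le h(T)$.

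Combining these two ingredients and using $G\ge 0$ so that $-G(\theta_T;\theta_T)\le 0$, we get
\[
    \Bigl(1-\tfrac{L\eta}{2}\Bigr)\frac{1}{T}\sum_{t=0}^{T-1}\|\nabla_\theta G(\theta_t;\theta_t)\|^2 \le \frac{G(\theta_0;\theta_0)}{T\eta} + \frac{h(T)}{2T\eta}.
\]
The condition $\eta<2/L$ makes the prefactor on the left strictly positive, the first term on the right vanishes as $T\to\infty$, and $h(T)/T\to 0$ by the assumption $h=o(T)$. Therefore the average of $\|\nabla_\theta G(\theta_t;\theta_t)\|^2$ tends to zero, and since the minimum is dominated by the average, the claim follows.

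The only genuinely new step compared to Theorem \ref{thm:convergence-gradients} is the inductive control $S(T)\le h(T)$, and this is the place to be careful: one must check that even when $d_t$ can be negative (because $\Lambda(\theta_{t+1})-\Lambda(\theta_t)$ need not be sign-definite) the update rule still guarantees the bound, which the above case analysis does. I expect no other obstacles; the remaining manipulations are exactly those already performed in the proof of Theorem \ref{thm:convergence-gradients}.
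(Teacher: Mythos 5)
Your proof is correct and follows essentially the same route as the paper: start from inequality \eqref{eq:inner-product-sgd-bound-general} (which indeed only needs Assumptions \ref{assumption:convex-parameter-domain} and \ref{assumption:uniformly-lipschitz-gradient}), bound the cumulative term by $h(T)$ via the update rule, and conclude from $h=o(T)$. The paper states the bound $S(T)\le h(T)$ tersely as ``applying the update rule,'' while you supply the case-split induction that justifies it; the extra care with the sign of $d_t$ is a nice touch but the substance is the same.
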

\begin{proof}
Starting from \eqref{eq:inner-product-sgd-bound-general}, applying the update rule \eqref{eq:update-rule}, and dividing by $1 - L\eta/2$, we have 
\[
    \frac1T\sum_{t=0}^{T-1}\|\nabla_\theta G(\theta_t;\theta_t)\|^2 \le \frac{G(\theta_0;\theta_0) - G(\theta_T;\theta_T)}{\eta T(1-L\eta/2)} + \frac{h(T)}{2\eta(1-L\eta/2) T}.
\]
Since $h=o(T)$ by assumption, the result follows.
\end{proof}

\begin{remark}
    The update rule \eqref{eq:update-rule} is based on a technical condition chosen specifically to control the extra term $\frac{1}{2T\eta}\sum_{t=0}^{T-1} \lambda_{\max}(\Lambda(\theta_{t+1}) - \Lambda(\theta_t))\|\calR(\theta_{t+1})\|^2$ in \eqref{eq:inner-product-sgd-bound-general}. If an additional term $\lambda_{\max}(\widetilde{\Lambda}(\theta_{t+1}) - \Lambda(\theta_t))\|\calR(\theta_{t+1})\|^2$ in the sum causes a growth rate $h(t) = o(t)$ to be exceeded, then the loss weights $\Lambda$ are not updated.
\end{remark}

\section{Fast Estimation of the Neural Tangent Kernel}\label{sec:fast-NTK}
To address the computational bottleneck of computing NTK-based loss weights in \eqref{eq:wang-inner-product}, we now present a practical algorithm to obtain fast approximations of the Neural Tangent Kernel $K(\theta_t)$ based on matrix sketching techniques \cite{RandomizedLinearAlgebraSurvey}. We first describe the algorithm and then provide intuition for why it works, showing a result on the estimator's unbiasedness (up to a discretization parameter). In particular, our objective is to find an approximation of $K(\theta_t)$ in expectation by means of $\widehat{K}(\theta_t)$.

To define a single sample approximation of $K(\theta_t)$, let us suppose that we have already computed $\theta_t$, and set $\Lambda_{\text{pred}}(\theta_t) = \diag(-g_t)\diag(\calR(\theta_t))^\dagger$, where $g_t \sim N(0, I_n)$, the notation $\diag(v)$ denotes the diagonal matrix with diagonal entries given by the entries of the vector $v$, and $A^\dagger$ denotes the Moore-Penrose pseudoinverse of the matrix $A$. We remark that, if none of the residual entries $\calR(\theta_t)_i$ is equal to 0, then $\diag(\calR(\theta_t))^\dagger = \diag(\calR(\theta_t))^{-1}$. The approximation we present here follows without assumptions on the residuals, while in Theorem \ref{thm:unbiased-estimates-up-to-discretization} we will later assume the residuals are nonzero to ensure that our approximation is indeed close to $K(\theta_t)g_t$. Thus, for now, let us define the following iteration scheme
\[
    \widehat{\theta}_{t+1} = \theta_t - \Delta t \nabla\calR(\theta_t)\Lambda_{\text{pred}}(\theta_t)\calR(\theta_t), \quad \text{for any}\quad \Delta t>0.
\]
This step can be thought of as the ``predictor" step of a predictor-corrector method to numerically solve an ordinary differential equation, except that $\widehat{\theta}_{t+1}$ is not related to $\theta_{t+1}$ which will be computed later. Thus, the sole purpose of $\widehat{\theta}_{t+1}$ is to obtain approximations to the NTK as
\[
    K(\theta_t)g_t \approx \frac{\calR(\widehat{\theta}_{t+1}) - \calR(\theta_t)}{\Delta t}.
\]
We will show in the proof of Theorem \ref{thm:unbiased-estimates-up-to-discretization} that, through a Taylor expansion, this is indeed an approximation of $K(\theta_t)g_t$. Then, given this approximate matrix-vector product, we may form a single sample approximation of the NTK as
\[
    \widehat{K}(\theta_t) = \frac{\widetilde{K}(\theta_t) + \widetilde{K}(\theta_t)^\top}{2}, \quad \text{where}\quad \widetilde{K}(\theta_t) = \begin{pmatrix}
        \frac{\calR(\widehat{\theta}_{t+1}) - \calR(\theta_t)}{\Delta t} (g_t)_1 & \dots & \frac{\calR(\widehat{\theta}_{t+1}) - \calR(\theta_t)}{\Delta t} (g_t)_n
    \end{pmatrix} \in \R^{n\times n},
\]
and we can estimate the trace of the NTK as
\[
    \Tr(\widehat{K}(\theta_t)) = g_t^\top \left(\frac{\calR(\widehat{\theta}_t) - \calR(\theta_t)}{\Delta t}\right).
\]
Note that if our approximation of $K(\theta_t)g_t$ is correct, then $(g_t)_iK(\theta_t)g_t$ in expectation equals the $i$-th column of $K(\theta_t)$, which shows that $\widehat{K}(\theta_t)$ would indeed be an approximation of $K(\theta_t)$. Multiple samples constructed in this way may be averaged together to obtain a more accurate estimate. The steps for the single sample approximation are summarized in Algorithm \ref{alg:single-sample-approx}. Note that the only step with significant computational cost is the computation of $\widehat{\theta}_{t+1}$, which is simply the cost of a single extra forward and backward pass through the neural network with the data batch without significant extra memory costs. Now we can use the result of this ``predictor" step to define $\widehat{\Lambda}(\theta_t)$ analogously to \eqref{eq:wang-inner-product}:
\begin{equation}\label{eq:analogous-wang-inner-product}
    \widehat{\Lambda}(\theta_t) = \begin{pmatrix}
        \widehat{\lambda}_D(\theta_t)I_{n_D} & 0\\
        0 & \widehat{\lambda}_B(\theta_t)I_{n_B}
    \end{pmatrix},\quad \text{where} \quad \widehat{\lambda}_D(\theta_t) = \frac{\Tr(\widehat{K}(\theta_t))}{\Tr(\widehat{K}_{DD}(\theta_t))},\quad \widehat{\lambda}_B(\theta_t) = \frac{\Tr(\widehat{K}(\theta_t))}{\Tr(\widehat{K}_{BB}(\theta_t))},
\end{equation}
Having defined $\widehat{\Lambda}(\theta_t)$, we can perform the ``corrector" step given by the typical gradient descent step \eqref{eq:adaptive-gd}.

\begin{algorithm}
\caption{Single sample approximation of $K(\theta_t)$}\label{alg:single-sample-approx}
\begin{algorithmic}
\Require Assumption \ref{assumption:ntk-approximation}, $\theta_t$ is given.
\Ensure $\widehat{K}(\theta_t) \approx K(\theta_t)$ in expectation
\State Sample $g_t \sim N(0, I_n)$
\State $\Lambda_{\text{pred}}(\theta_t) \gets \diag(-g_t)\diag(\calR(\theta_t))^{-1}$ 
\State $\widehat{\theta}_{t+1} \gets \theta_t - \Delta t \nabla\calR(\theta_t)\Lambda_{\text{pred}}(\theta_t)\calR(\theta_t)$
\State $\widetilde{K}(\theta_t) \gets \begin{pmatrix}
        \frac{\calR(\widehat{\theta}_{t+1}) - \calR(\theta_t)}{\Delta t} (g_t)_1 & \dots & \frac{\calR(\widehat{\theta}_{t+1}) - \calR(\theta_t)}{\Delta t} (g_t)_n
    \end{pmatrix}$
\State $\widehat{K}(\theta_t) \gets \frac{\widetilde{K}(\theta_t) + \widetilde{K}(\theta_t)^\top}{2}$
\end{algorithmic}
\end{algorithm}

\subsection{Error Analysis of Fast NTK Approximation}

We begin the analysis of the proposed Algorithm \ref{alg:single-sample-approx} by introducing the assumptions that we will require for this purpose.

\begin{assumption}[Fast NTK Approximation Assumptions]\label{assumption:ntk-approximation}
Assume the following hold:
    
    \begin{enumerate}[ref={\theassumption.\arabic*}] 
        \item\label{assumption:nonzero-residuals} For all $t=0,1,\dots,T$, the residual $\calR(\theta_t)_i \ne 0$ for $i=1,2,\dots,n$.
        \item\label{assumption:bounded-parameter-domain} The parameter domain $\calP\subseteq\R^p$ is open, convex, and bounded.
        \item\label{assumption:bounded-residual-derivatives} The residual $\calR$ is smooth and has bounded derivatives in $\calP$.
    \end{enumerate}
\end{assumption}
Note that Assumption \ref{assumption:bounded-residual-derivatives} will be guaranteed by using a smooth activation function for our PINN (which will also be required in general for allowing higher order differential operators in the loss) and by Assumption \ref{assumption:bounded-parameter-domain}. The Assumption \ref{assumption:nonzero-residuals} is reasonable since we expect that the residuals are never exactly minimized in a typical PINN optimization. The second Assumption \ref{assumption:bounded-parameter-domain} then is the most restrictive but is reasonable if the gradient descent iterates are converging. Now under this assumption, we will show that Algorithm \ref{alg:single-sample-approx} provides an accurate approximation of the NTK.

\begin{theorem}[Unbiased Estimates of NTK up to Discretization]\label{thm:unbiased-estimates-up-to-discretization}
    Under Assumption \ref{assumption:ntk-approximation}, there exists a constant $C>0$ such that for any $\Delta t>0$ it holds
    \[
        \E\left[\left\|\frac{\calR(\widehat{\theta}_{t+1}) - \calR(\theta_t)}{\Delta t} - K(\theta_t)g_t\right\|\right] \le Cn\Delta t,\quad \text{and} \quad \E\left[\left\|\frac{\calR(\widehat{\theta}_{t+1}) - \calR(\theta_t)}{\Delta t} - K(\theta_t)g_t\right\|^2\right] \le C^2 n(n+2)\Delta t^2.
    \]
    In particular, the following are satisfied
    \[
        \left\|\E\left[\widehat{K}(\theta_t)\right] - K(\theta_t)\right\|_F \le Cn(n+1)\Delta t,\quad\text{and}\quad  \left|\E\left[\Tr(\widehat{K}(\theta_t))\right] - \Tr(K(\theta_t))\right| \le Cn(n+1)\Delta t.
    \]
\end{theorem}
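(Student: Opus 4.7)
The plan is to leverage the special structure of the random weighting $\Lambda_{\text{pred}}$ to reduce the predictor step to a pure Gaussian perturbation, apply a second-order Taylor expansion of $\calR$, and then conclude with standard Gaussian moment identities.

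First, I would exploit Assumption \ref{assumption:nonzero-residuals}, under which the pseudoinverse coincides with the ordinary inverse, so that $\Lambda_{\text{pred}}(\theta_t)\calR(\theta_t) = \diag(-g_t)\diag(\calR(\theta_t))^{-1}\calR(\theta_t) = -g_t$. Consequently the predictor step collapses to $\widehat{\theta}_{t+1} = \theta_t + \Delta t\,\nabla\calR(\theta_t)g_t$. All the randomized weighting machinery reduces to a clean perturbation along $\nabla\calR(\theta_t) g_t$, which is exactly the direction that, when left-multiplied by $\nabla\calR(\theta_t)^\top$, recovers $K(\theta_t) g_t$.

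Next, by Assumptions \ref{assumption:bounded-parameter-domain}--\ref{assumption:bounded-residual-derivatives}, the Hessians $H_i = \nabla^2 \calR_i$ and the Jacobian $\nabla\calR$ are uniformly bounded on $\calP$. A second-order Taylor expansion with Lagrange remainder then yields, componentwise,
\[
\calR(\widehat{\theta}_{t+1})_i - \calR(\theta_t)_i = \Delta t\,(K(\theta_t) g_t)_i + \tfrac{\Delta t^2}{2}\, g_t^\top \nabla\calR(\theta_t)^\top H_i(\xi_i)\nabla\calR(\theta_t)\, g_t,
\]
for some $\xi_i$ on the segment between $\theta_t$ and $\widehat{\theta}_{t+1}$ (which lies in $\calP$ for $\Delta t$ sufficiently small because $\calP$ is open). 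The uniform bound $\|\nabla\calR(\theta_t)^\top H_i(\xi_i)\nabla\calR(\theta_t)\| \le M$ reduces each remainder entry to $O(\Delta t^2 \|g_t\|^2)$, producing
\[
\frac{\calR(\widehat{\theta}_{t+1}) - \calR(\theta_t)}{\Delta t} = K(\theta_t) g_t + \Delta t\, r(\theta_t, g_t),
\qquad \|r(\theta_t, g_t)\| \le C'\|g_t\|^2.
\]
Combining this with the Gaussian moment identities $\E[\|g_t\|^2] = n$ and $\E[\|g_t\|^4] = n(n+2)$ (since $\|g_t\|^2 \sim \chi_n^2$) immediately yields the two scalar bounds.

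For the matrix bound I would write $\widetilde K(\theta_t) = K(\theta_t) g_t g_t^\top + \Delta t\, r\, g_t^\top$, use $\E[g_t g_t^\top] = I_n$ together with $K(\theta_t) = K(\theta_t)^\top$ to reduce $\E[\widehat{K}(\theta_t)] - K(\theta_t)$ to the symmetrized remainder $\tfrac{\Delta t}{2}(\E[r g_t^\top] + \E[g_t r^\top])$, and control its Frobenius norm by Cauchy-Schwarz and the second moment bound above. For the trace, the Hutchinson identity $\E[g_t^\top K(\theta_t) g_t] = \Tr(K(\theta_t))$ handles the main term, and Cauchy-Schwarz on $g_t^\top r$ handles the remainder.

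The main obstacle is the bookkeeping to match the precise powers of $n$ in the stated bounds: a naive application of the triangle inequality followed by Cauchy-Schwarz tends to yield $n^{3/2}$-type factors rather than the cleaner $n$ and $n(n+1)$ appearing in the statement. Achieving the exact exponents will likely require either a componentwise analysis that exploits the independence and cross-moment cancellations of the Gaussian entries $(g_t)_i$, or absorbing subleading $\sqrt{n}$-type factors into the constant $C$. The remaining steps are routine once the Hessian bound from Assumption \ref{assumption:bounded-residual-derivatives} and the reduction in the first paragraph are in hand.
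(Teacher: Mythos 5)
Your proposal follows the same route as the paper: reduce the predictor step to $\widehat{\theta}_{t+1} = \theta_t + \Delta t\,\nabla\calR(\theta_t)g_t$ via Assumption \ref{assumption:nonzero-residuals}, Taylor-expand $\calR$ using Assumptions \ref{assumption:bounded-parameter-domain} and \ref{assumption:bounded-residual-derivatives} to obtain a remainder bounded by $C\Delta t\|g_t\|^2$, and close with the Gaussian moments $\E[\|g_t\|^2]=n$, $\E[\|g_t\|^4]=n(n+2)$ and the unbiasedness of the Hutchinson-type term. Your treatment of the matrix and trace bounds is organized a bit more cleanly than the paper's: you factor $\widetilde K(\theta_t) = K(\theta_t)g_tg_t^\top + \Delta t\,r\,g_t^\top$, kill the leading term with $\E[g_tg_t^\top]=I_n$, and bound the remainder via $\|r g_t^\top\|_F = \|r\|\,\|g_t\|$, whereas the paper compares $\widehat K$ column-by-column to the auxiliary matrix $\mathcal{K}$ and invokes the looser bound $\|g_t\|_2\le\sum_i|(g_t)_i|$. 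The two arguments are otherwise the same.

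Your stated worry about the powers of $n$ is unfounded: the ``naive'' computation you describe gives $\|\E[\widehat K]-K\|_F\le\Delta t\,\E[\|r\|\,\|g_t\|]\le C'\Delta t\,\E[\|g_t\|^3]\le C'\Delta t\,\sqrt{\E[\|g_t\|^2]\E[\|g_t\|^4]}=C'n\sqrt{n+2}\,\Delta t$, and $n\sqrt{n+2}\le n(n+1)$ for all $n\ge1$. So the clean route already dominates the claimed $Cn(n+1)\Delta t$ bound (in fact it is slightly stronger), and there is no cross-moment cancellation or constant absorption needed. The same $\E[\|g_t\|^3]$ bound handles the trace term $\E[|g_t^\top r|]$. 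With that observation your argument is complete and correct, and in fact marginally sharper than the paper's.

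One point to tidy up, which the paper itself elides: since $g_t$ is Gaussian, the perturbed point $\widehat{\theta}_{t+1}$ need not lie in $\calP$ no matter how small $\Delta t$ is, so ``for $\Delta t$ sufficiently small'' does not by itself justify the Lagrange remainder. The clean fix is to read Assumption \ref{assumption:bounded-residual-derivatives} as a bound on derivatives that extends to the (convex) segment from $\theta_t$ to $\widehat\theta_{t+1}$, or equivalently that $\calR$ extends smoothly with bounded derivatives to $\R^p$; with that reading both proofs go through.
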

\begin{proof}
Thanks to the fact that derivatives of $\calR$ are smooth in $\theta$ and bounded by Assumption \ref{assumption:bounded-residual-derivatives}, by definition of $\widehat{\Lambda}(\theta_t)$ we can use $\widehat{\theta}_{t+1} = \theta_t - \Delta t\nabla\calR(\theta_t)\Lambda_{\text{pred}}(\theta_t)\calR(\theta_t) = \theta_t + \Delta t\nabla\calR(\theta_t)g_t$, and then perform a Taylor expansion to obtain
\begin{equation}\label{eq:difference-quotient-bound}
\begin{split}
    \left\|\frac{\calR(\widehat{\theta}_{t+1}) - \calR(\theta_t)}{\Delta t} - K(\theta_t)g_t\right\|
    &= \left\|\frac{\calR(\theta_t + \Delta t\nabla\calR(\theta_t)g_t) - \calR(\theta_t)}{\Delta t} - K(\theta_t)g_t\right\|\\ 
    &\le \left\|\frac{\calR(\theta_t) + \Delta t\nabla\calR(\theta_t)^\top \nabla \calR(\theta_t)g_t - \calR(\theta_t)}{\Delta t} - K(\theta_t)g_t\right\| + \frac{C_1\Delta t^2\|\nabla\calR(\theta_t)g_t\|^2}{\Delta t}\\ 
    &= C_1\Delta t\|\nabla\calR(\theta_t)g_t\|^2 \le C_2\Delta t\|g_t\|^2.
\end{split}
\end{equation}
Taking the expectation of this yields the first part of the theorem since $\E[\|g_t\|^2] = n$ and $\E[\|g_t\|^4] = n(n+2)$, for the first and second inequality, respectively.

Now defining
\[
    \mathcal{K} = \frac12\left[\begin{pmatrix} (g_t)_1K(\theta_t)g_t & \dots & (g_t)_nK(\theta_t)g_t\end{pmatrix} + \begin{pmatrix} (g_t)_1K(\theta_t)g_t & \dots & (g_t)_nK(\theta_t)g_t\end{pmatrix}^\top\right],
\]
and using the fact that $\E[\mathcal{K}] = K(\theta_t)$ since $\E[(g_t)_iK(\theta_t)g_t] = K(\theta_t)e_i$ with $e_i$ the $i$-th standard basis vector of $\R^n$, we then have
\begin{align*}
    \left\|\E\left[\widehat{K}(\theta_t)\right] - K(\theta_t)\right\|_F
    &= \left\|\E\left[\widehat{K}(\theta_t)\right] - \E [\mathcal{K}]\right\|_F\\
    &\le \E\left[\left\|\widehat{K}(\theta_t) - \mathcal{K}\right\|_F\right]\\
    &\le \sum_{i=1}^n \E\left[|(g_t)_i|\cdot\left\|\frac{\calR(\widehat{\theta}_t)-\calR(\theta_t)}{\Delta t} - K(\theta_t)g_t\right\|\right]\\
    &\le \sum_{i=1}^n \sqrt{\E[(g_t)_i^2]}\sqrt{\E\left[\left\|\frac{\calR(\widehat{\theta}_t)-\calR(\theta_t)}{\Delta t} - K(\theta_t)g_t\right\|^2\right]}\\
    &\le n\sqrt{C^2 n(n+2)\Delta t^2} \le Cn(n+1)\Delta t,
\end{align*}
where in the second to last line, we used the Cauchy-Schwarz inequality, while in the last line we used $n(n+2) < (n+1)^2$, the fact that $\E[(g_t)_i^2]=1$, and our previously derived bound on the mean squared error of the difference quotient.

Lastly, following a similar sequence of inequalities, we have
\begin{align*}
    \left|\E\left[\Tr(\widehat{K}(\theta_t))\right] - \Tr(K(\theta_t))\right|
    &= \left|\E\left[\Tr(\widehat{K}(\theta_t))\right] - \E\left[\Tr(\mathcal{K})\right]\right|\\
    &\le \E\left[\left|\Tr(\widehat{K}(\theta_t) - \mathcal{K})\right|\right]\\
    &\le \sum_{i=1}^n \E\left[\left|\widehat{K}(\theta_t)_{ii} - \mathcal{K}_{ii}\right|\right]\\
    &= \sum_{i=1}^n \E\left[\left|\frac{\calR(\widehat{\theta}_t)_i - \calR(\theta_t)_i}{\Delta t}(g_t)_{i} - (K(\theta_t)g_t)_i(g_t)_i\right|\right]\\
    &\le \sum_{i=1}^n \sqrt{\E[(g_t)_i^2]}\sqrt{\E\left[\left|\frac{\calR(\widehat{\theta}_t)_i - \calR(\theta_t)_i}{\Delta t} - (K(\theta_t)g_t)_i\right|^2\right]}\\
    &\le \sum_{i=1}^n \sqrt{\E\left[\left\|\frac{\calR(\widehat{\theta}_t) - \calR(\theta_t)}{\Delta t} - K(\theta_t)g_t\right\|^2\right]}\\
    &\le Cn(n+1)\Delta t,
\end{align*}
as desired.
\end{proof}
\begin{remark}
    In place of Gaussian random vectors, there are other choices of vectors that could be used in Algorithm \ref{alg:single-sample-approx}. For example, one could choose Rademacher random vectors with i.i.d.\ entries taking the values $\pm1$, each with probability $1/2$. The main trick of the algorithm is that we can obtain a matrix-vector product of $K$ with any vector $v$ up to a discretization error of $O(\Delta t)$ by Taylor's theorem. 
\end{remark}

\begin{remark}
    By definition of the construction of $\widehat{K}(\theta_t)$, some of the entries of the estimator may be negative. It follows that $\widehat{K}(\theta_t)$ is dominated by an estimator obtained by taking the maximum of the entries of $\widehat{K}(\theta_t)$ and 0. In other words, one could define the estimator $\check{K}(\theta_t)$ given by
    \[
        \check{K}(\theta_t)_{ij} = \max(\widehat{K}(\theta_t)_{ij}, 0),\quad \text{for } i,j=1,\dots,n,
    \]
    which has a better mean absolute (and mean squared) error:
    \[
        \E\left[\|\check{K}(\theta_t) - K(\theta_t)\|_F\right] \le \E\left[\|\widehat{K}(\theta_t) - K(\theta_t)\|_F\right].
    \]
\end{remark}
We now present a result which provides a Monte Carlo error rate, up to a discretization error, for an approximation of the NTK obtained by averaging samples from Algorithm \ref{alg:single-sample-approx}.

\begin{theorem}[Error Rates of NTK Approximation]\label{thm:Monte-Carlo-error}
    Assume that Assumption \ref{assumption:ntk-approximation} holds, and let $\{\widehat{K}^{(j)}(\theta_t)\}_{j=1}^N$ be the $N$ results of Algorithm \ref{alg:single-sample-approx} with $N$ i.i.d.\ copies $g_t^{(j)}\sim N(0,I_n)$. Then for any $\Delta t>0$ it holds
    \[
        \E\left[\left|\frac{1}{N}\sum_{j=1}^N \Tr(\widehat{K}^{(j)}(\theta_t)) - \Tr(K(\theta_t))\right|^2\right] \le C n(n+2)(n+4)\Delta t^2 + \frac{4}{N}\|K(\theta_t)\|_F^2,
    \]
    and
    \[
        \E\left[\left\|\frac{1}{N}\sum_{j=1}^N \widehat{K}^{(j)}(\theta_t) - K(\theta_t)\right\|_F^2\right] \le Cn^2(n+2)(n+4)\Delta t^2 + \frac{2n(n+2)}{N}\|K(\theta_t)\|_F^2,
    \]
    where $C>0$ is a constant.
\end{theorem}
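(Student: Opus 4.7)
The plan is to add and subtract the idealized estimator $\mathcal{K}^{(j)} = \tfrac12\bigl((K(\theta_t)g_t^{(j)})(g_t^{(j)})^\top + g_t^{(j)}(K(\theta_t)g_t^{(j)})^\top\bigr)$ used implicitly in the proof of Theorem \ref{thm:unbiased-estimates-up-to-discretization}, which replaces the finite difference by the exact Jacobian-vector product and satisfies $\E[\mathcal{K}^{(j)}] = K(\theta_t)$. Writing
\[
    \frac1N\sum_{j=1}^N \widehat{K}^{(j)}(\theta_t) - K(\theta_t) = \underbrace{\frac1N\sum_{j=1}^N\bigl(\widehat{K}^{(j)}(\theta_t)-\mathcal{K}^{(j)}\bigr)}_{\text{discretization}} + \underbrace{\frac1N\sum_{j=1}^N \mathcal{K}^{(j)} - K(\theta_t)}_{\text{Monte Carlo}},
\]
the inequality $\|a+b\|_F^2 \le 2\|a\|_F^2+2\|b\|_F^2$ reduces the task to bounding each piece separately in expectation. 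The trace bound follows the same recipe with the scalar Hutchinson estimator $Z_j = (g_t^{(j)})^\top K(\theta_t) g_t^{(j)}$ playing the role of $\mathcal{K}^{(j)}$.

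For the discretization piece I would reuse the pointwise estimate $\|\widehat{K}^{(j)}(\theta_t) - \mathcal{K}^{(j)}\|_F \le C\Delta t \|g_t^{(j)}\|^3$, which follows from the Taylor bound \eqref{eq:difference-quotient-bound} combined with the extra factor of $\|g_t^{(j)}\|$ contributed by the outer-product structure of $\widetilde{K}$, followed by the symmetrization bound $\|X+X^\top\|_F \le 2\|X\|_F$. Since the increments $\widehat K^{(j)} - \mathcal K^{(j)}$ are i.i.d., the elementary identity $\E\bigl\|\tfrac1N\sum_j X_j\bigr\|_F^2 = \tfrac1N\,\E\|X_1-\E X_1\|_F^2 + \|\E X_1\|_F^2 \le \E\|X_1\|_F^2$ bounds the averaged squared error by a single-sample one, yielding $C^2\Delta t^2\,\E\|g_t^{(1)}\|^6 = C^2 n(n+2)(n+4)\Delta t^2$ via the standard $\chi^2$ moment formula. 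Cauchy--Schwarz gives the analogous trace bound $|Y_j - Z_j| \le \|g_t^{(j)}\|\cdot C\Delta t\|g_t^{(j)}\|^2 = C\Delta t\|g_t^{(j)}\|^3$.

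For the Monte Carlo piece the unbiasedness $\E[\mathcal{K}^{(j)}] = K(\theta_t)$ collapses the averaged squared error to $\tfrac1N\bigl(\E\|\mathcal{K}^{(1)}\|_F^2 - \|K(\theta_t)\|_F^2\bigr)$. Expanding the symmetrized rank-two estimator gives
\[
    \|\mathcal{K}^{(1)}\|_F^2 = \tfrac12\|K(\theta_t) g_t^{(1)}\|^2\|g_t^{(1)}\|^2 + \tfrac12\bigl((g_t^{(1)})^\top K(\theta_t) g_t^{(1)}\bigr)^2,
\]
and the Isserlis/Wick identity $\E[(g^\top A g)(g^\top B g)] = \Tr(A)\Tr(B) + 2\Tr(AB)$ for symmetric $A,B$ with $g\sim N(0,I_n)$ evaluates both quadratic-form products in closed form, producing $\E\|\mathcal{K}^{(1)}\|_F^2 - \|K(\theta_t)\|_F^2 = \tfrac{n+2}{2}\|K(\theta_t)\|_F^2 + \tfrac12\Tr(K(\theta_t))^2$. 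Bounding $\Tr(K(\theta_t))^2 \le n\|K(\theta_t)\|_F^2$ via Cauchy--Schwarz on eigenvalues controls this by $(n+1)\|K(\theta_t)\|_F^2$, which after the factor of $2$ from the $\|a+b\|^2$ decomposition is absorbed into the claimed $\tfrac{2n(n+2)}{N}\|K(\theta_t)\|_F^2$ since $n+1 \le n(n+2)$ for $n\ge 1$. For the trace, the same identity with $A = B = K(\theta_t)$ yields exactly $\Var(Z_1) = 2\|K(\theta_t)\|_F^2$, giving the $\tfrac{4}{N}\|K(\theta_t)\|_F^2$ constant directly.

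The main obstacle I anticipate is the fourth-order Gaussian-moment bookkeeping for $\E\|\mathcal{K}^{(1)}\|_F^2$: one has to track the symmetrization factor of $\tfrac12$ correctly and pair indices cleanly when expanding the square of the symmetrized outer product $(Kg)g^\top + g(Kg)^\top$. Everything else is routine algebra, and the last cosmetic step is to use $n\ge 1$ to absorb the mismatch between $2C^2 n(n+2)(n+4)$ and $Cn^2(n+2)(n+4)$ (and between $2(n+1)$ and $2n(n+2)$) into a single, possibly larger, constant $C>0$ consistent with the statement.
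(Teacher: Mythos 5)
Your proposal is essentially the paper's argument: same decomposition into a discretization piece (replacing the finite difference by the idealized rank-one estimator built from $K(\theta_t)g_t^{(j)}$) and a Monte Carlo piece, with the $\|a+b\|^2\le 2\|a\|^2+2\|b\|^2$ split and the standard $\chi^2$ moment $\E\|g\|^6=n(n+2)(n+4)$ controlling the Taylor error, and with the Hutchinson-estimator variance $4\|K\|_F^2/N$ for the trace. The one place you genuinely diverge is the Monte Carlo piece of the Frobenius bound: you compute $\E\|\mathcal K^{(1)}\|_F^2$ exactly using the Isserlis/Wick identity $\E[(g^\top Ag)(g^\top Bg)]=\Tr A\Tr B+2\Tr(AB)$, obtaining $\E\|\mathcal K^{(1)}\|_F^2-\|K\|_F^2=\tfrac{n+2}{2}\|K\|_F^2+\tfrac12\Tr(K)^2\le(n+1)\|K\|_F^2$, whereas the paper sidesteps the symmetrization bookkeeping by noting that symmetrization is a Frobenius contraction, bounding the unsymmetrized estimator via $\E[\|g\|^4]\|K\|_F^2=n(n+2)\|K\|_F^2$. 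Your computation is tighter (by roughly a factor of $n$) but involves the fourth-moment index pairing you flag as the delicate step; the paper's bound is cruder but avoids Wick entirely. Your discretization bound is also slightly sharper ($n(n+2)(n+4)$ vs.\ the paper's $n^2(n+2)(n+4)$, because the paper crudely estimates $(g_1)^2\|g\|^4\le\|g\|^6$); as you note, these mismatches are absorbed by enlarging the constant $C$, so both routes reach the stated inequalities.
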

\begin{proof}
First, by the triangle inequality and Young's inequality we have
\begin{align*}
    &\E\left[\left|\frac{1}{N}\sum_{j=1}^N \Tr(\widehat{K}^{(j)}(\theta_t)) - \Tr(K(\theta_t))\right|^2\right] \\
    &\le \E\left[\underbrace{2\left|\frac{1}{N}\sum_{j=1}^N \Tr(\widehat{K}^{(j)}(\theta_t)) - \frac{1}{N}\sum_{j=1}^N (g_t^{(j)})^\top K(\theta_t)g_t^{(j)}\right|^2}_{(a)} + \underbrace{2\left|\frac{1}{N}\sum_{j=1}^N (g_t^{(j)})^\top K(\theta_t)g_t^{(j)} - \Tr(K(\theta_t))\right|^2}_{(b)}\right].
\end{align*}
For $(a)$, first note that for all $j=1,2,\dots,N$, by Taylor expansion as in the proof of Theorem \ref{thm:unbiased-estimates-up-to-discretization} we have,
\[
    \Tr(\widehat{K}^{(j)}(\theta_t)) = (g_t^{(j)})^\top\frac{\calR(\widehat{\theta}_t^{(j)}) - \calR(\theta_t)}{\Delta t} = (g_t^{(j)})^\top(K(\theta_t)g_t^{(j)} + \mathcal{E}_t^{(j)}),
\]
where $\|\mathcal{E}_t^{(j)}\| \le C\Delta t\|g_t^{(j)}\|^2$. It follows that
\begin{align*}
    (a) &= 2\E\left[\left|\frac{1}{N}\sum_{i=1}^N \left(\Tr(\widehat{K}^{(i)}(\theta_t)) - (g_t^{(i)})^\top K(\theta_t)g_t^{(i)}\right)\right|^2\right] = 2\E\left[\left|\frac{1}{N}\sum_{j=1}^N (g_t^{(j)})^\top\mathcal{E}_t^{(j)}\right|^2\right]\\
    &\le 2\E\left[\left(\frac{1}{N}\sum_{j=1}^N C\Delta t \|g_t^{(j)}\|^3\right)^2\right] \le \frac{2}{N^2}\E\left[N\sum_{j=1}^N C^2\Delta t^2\|g_t^{(j)}\|^6\right] = 2C^2\Delta t^2\E[\|g_t^{(1)}\|^6],
\end{align*}
where the last inequality comes from the fact that $(a_1+\dots+a_N)^2 \le N(a_1^2 + \dots + a_N^2)$. Using $\E[\|g_t^{(1)}\|^6] = n(n+2)(n+4)$, we obtain $(a) \le 2C^2n(n+2)(n+4)\Delta t^2$.

Now $(b)$ is the standard error for the Hutchinson estimator exploited in randomized numerical linear algebra to estimate the trace of a matrix using random matrix-vector products. By Theorem 2.1 in \cite{Girard1989}, we have that
\[
    (b) = 2\E\left[\left|\frac{1}{N}\sum_{j=1}^N (g_t^{(j)})^\top K(\theta_t)g_t^{(j)} - \Tr(K(\theta_t))\right|^2\right] = \frac{4}{N}\|K(\theta_t)\|_F^2.
\]
Combining the estimates for $(a)$ and $(b)$ yields the result.

For the second result, we have
\begin{align*}
    &\E\left[\left\|\frac{1}{N}\sum_{j=1}^N \widehat{K}^{(j)}(\theta_t) - K(\theta_t)\right\|_F^2\right]\\
    &\le 2\E\left[\left\|\frac{1}{N}\sum_{j=1}^N \widehat{K}^{(j)}(\theta_t) - \frac{1}{N}\sum_{j=1}^N \mathcal{K}^{(j)}(\theta_t)\right\|_F^2 + \left\|\frac{1}{N}\sum_{j=1}^N \mathcal{K}^{(j)}(\theta_t) - K(\theta_t)\right\|_F^2\right] =: (c) + (d).
\end{align*}

Considering $(c)$, we have 
\begin{align*}
    (c) &= 2\E\left[\left\|\frac{1}{N}\sum_{j=1}^N \widehat{K}^{(j)}(\theta_t) - \frac{1}{N}\sum_{j=1}^N \mathcal{K}^{(j)}(\theta_t)\right\|_F^2\right]\\
    &\le \frac{2}{N^2}\E\Bigg[\Bigg\|\sum_{j=1}^N \left(\begin{pmatrix}
        (g_t^{(j)})_1\frac{\calR(\widehat{\theta}_t^{(j)}) - \calR(\theta_t)}{\Delta t} & \dots & (g_t^{(j)})_n\frac{\calR(\widehat{\theta}_t^{(j)}) - \calR(\theta_t)}{\Delta t}
    \end{pmatrix}\right.\\ 
    &\hspace{10ex}\left.- \begin{pmatrix}
        (g_t^{(j)})_1K(\theta_t)g_t^{(j)} & \dots & (g_t^{(j)})_nK(\theta_t)g_t^{(j)}
    \end{pmatrix}\right)\Bigg\|_F^2\Bigg]\\
    &= \frac{2}{N^2} \E\left[\sum_{i=1}^N \left\|\sum_{j=1}^N (g_t^{(j)})_i\left(\frac{\calR(\widehat{\theta}_t^{(j)}) - \calR(\theta_t)}{\Delta t} - K(\theta_t)g_t^{(j)}\right)\right\|_F^2\right]\\
    &= \frac{2n}{N^2} \E\left[\left\|\sum_{j=1}^N (g_t^{(j)})_1\left(\frac{\calR(\widehat{\theta}_t^{(j)}) - \calR(\theta_t)}{\Delta t} - K(\theta_t)g_t^{(j)}\right)\right\|_F^2\right]\\
    &\le \frac{2n}{N} \E\left[\sum_{j=1}^N\left\|(g_t^{(j)})_1\left(\frac{\calR(\widehat{\theta}_t^{(j)}) - \calR(\theta_t)}{\Delta t} - K(\theta_t)g_t^{(j)}\right)\right\|_F^2\right]\\
    &\le Cn\Delta t^2\E[\|g_t^{(1)}\|^6] = Cn^2(n+2)(n+4)\Delta t^2,
\end{align*}
where in the last line, we applied Equation~\ref{eq:difference-quotient-bound}.

Now considering $(d)$, repeatedly using the fact that $\mathcal{K}^{(j)}(\theta_t)$ are i.i.d.\ with mean $K(\theta_t)$, we have
\begin{align*}
    (d) &= 2\E\left[\left\|\frac{1}{N}\sum_{j=1}^N \mathcal{K}^{(j)}(\theta_t) - K(\theta_t)\right\|_F^2\right]\\
    &= 2\E\left[\left\|\frac{1}{N}\sum_{j=1}^N \mathcal{K}^{(j)}(\theta_t)\right\|_F^2 - 2\left\langle \frac{1}{N}\sum_{j=1}^N \mathcal{K}^{(j)}(\theta_t), K(\theta_t)\right\rangle_F + \|K(\theta_t)\|_F^2\right]\\
    &= -2\|K(\theta_t)\|_F^2 + 2\E\left[\left\|\frac{1}{N}\sum_{j=1}^N \mathcal{K}^{(j)}(\theta_t)\right\|_F^2\right]\\
    &= -2\|K(\theta_t)\|_F^2 + \frac{2}{N^2}\sum_{j=1}^N \E\left[\left\|\mathcal{K}^{(j)}(\theta_t)\right\|_F^2\right] + \frac{2}{N^2}\sum_{i\ne j} \left\langle\E\left[\mathcal{K}^{(i)}(\theta_t)\right],\E\left[\mathcal{K}^{(j)}(\theta_t)\right]\right\rangle_F\\
    &= -2\|K(\theta_t)\|_F^2 + \frac{2}{N^2}\sum_{j=1}^N \E\left[\|K(\theta_t)+\mathcal{E}^{(j)}_t\|_F^2\right] + \frac{2(N-1)}{N}\|K(\theta_t)\|_F^2,
\end{align*}
where we redefine $\mathcal{E}_t^{(j)} = \mathcal{K}^{(j)}(\theta_t) - K(\theta_t)$. Now note that
\begin{align*}
    \frac{2}{N^2}\sum_{j=1}^N \E\left[\|K(\theta_t)+\mathcal{E}^{(j)}_t\|_F^2\right] &= \frac{2}{N}\left( \|K(\theta_t)\|_F^2 + \left\langle K(\theta_t), \E\left[\mathcal{E}_t^{(1)}\right]\right\rangle_F + \E\left[\|\mathcal{E}_t^{(1)}\|_F^2\right]\right)\\ 
    &= \frac{2}{N}\left( \|K(\theta_t)\|_F^2 + \E\left[\|\mathcal{E}_t^{(1)}\|_F^2\right]\right),
\end{align*}
where we used the fact that $\E[\mathcal{E}_t^{(1)}] = 0$. Now using that $\E[\|g_t\|^4] = n(n+2)$, we have
\begin{align*}
    \E\left[\|\mathcal{E}_t^{(1)}\|_F^2\right]
    &= \E[\|\mathcal{K}^{(1)}(\theta_t)\|_F^2] - \|K(\theta_t)\|_F^2\\
    &\le \E[\left\|\begin{pmatrix}
        (g_t)_1 K(\theta_t)g_t & \dots & (g_t)_n K(\theta_t)g_t
    \end{pmatrix}\right\|_F^2] - \|K(\theta_t)\|_F^2\\
    &= \E\left[\sum_{i=1}^n |(g_t)_i|^2\|K(\theta_t)g_t\|_F^2\right] - \|K(\theta_t)\|_F^2\\
    &\le \E[\|g_t\|^4]\cdot\|K(\theta_t)\|_F^2 - \|K(\theta_t)\|_F^2\\
    &\le n(n+2)\|K(\theta_t)\|_F^2.
\end{align*}
It follows that
\begin{align*}
    (d) &= \frac{2}{N}\E\left[\|\mathcal{E}_t^{(1)}\|_F^2\right] \le \frac{2n(n+2)}{N}\|K(\theta_t)\|_F^2.
\end{align*}
Combining this with the bound for $(c)$ gives the desired result.
\end{proof}

We now report another result on approximating the trace of the neural tangent kernel based on Theorem 4.1 in \cite{OtherTraceNTKResult} to compare it with our own strategy.
\begin{theorem}[Alternative Approximation of the NTK Trace]\label{thm:alternative-ntk-approximation}
    Let $g_t\sim N(0, \varepsilon^2I_n)$ and let $\theta_t$ be given. Under Assumption \ref{assumption:ntk-approximation}, there is $C>0$ such that
    \[
        - C(n+1)^2\varepsilon \le \E\left[\left\|\frac{\calR(\theta_t + g_t) - \calR(\theta_t)}{\varepsilon}\right\|_F^2\right] - \Tr(K(\theta_t)) \le C(n+1)^2(\varepsilon+\varepsilon^2).
    \]
\end{theorem}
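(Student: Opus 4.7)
The plan is to Taylor-expand $\calR$ around $\theta_t$ as in the proof of Theorem~\ref{thm:unbiased-estimates-up-to-discretization}, square the resulting difference quotient, take expectation, and control the pieces with standard Gaussian moment identities. By Assumption~\ref{assumption:bounded-residual-derivatives}, $\calR$ is smooth with bounded derivatives on $\calP$, so I would write
\[
    \calR(\theta_t + g_t) - \calR(\theta_t) = \nabla\calR(\theta_t)^\top g_t + E(g_t),
\]
where the componentwise Taylor remainder satisfies $|E(g_t)_i| \le C_1 \|g_t\|^2$ and hence $\|E(g_t)\|^2 \le C_2 n \|g_t\|^4$.

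Dividing by $\varepsilon$, squaring, and expanding the inner product splits the squared norm into three pieces: a leading term $\|\nabla\calR(\theta_t)^\top g_t\|^2/\varepsilon^2$, a cross term $2\langle \nabla\calR(\theta_t)^\top g_t, E(g_t)\rangle/\varepsilon^2$, and a remainder square $\|E(g_t)\|^2/\varepsilon^2$. Taking expectations, the leading term reduces to $\Tr(K(\theta_t))$ by the Hutchinson identity $\E[g_t^\top K(\theta_t) g_t] = \varepsilon^2 \Tr(K(\theta_t))$, which exactly cancels the quantity being subtracted in the statement. Using $\E[\|g_t\|^4] = n(n+2)\varepsilon^4$ from the chi-squared distribution of $\|g_t/\varepsilon\|^2$, the remainder square contributes at most $C_3(n+1)^2\varepsilon^2$ and is non-negative, so it enters only the upper bound. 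For the cross term, Cauchy--Schwarz combined with $\E[\|\nabla\calR(\theta_t)^\top g_t\|^2] = \varepsilon^2 \Tr(K(\theta_t)) \le \varepsilon^2 n \|K(\theta_t)\|$, where $\|K(\theta_t)\|$ is uniformly bounded on $\calP$ by Assumption~\ref{assumption:bounded-residual-derivatives}, yields an absolute bound of order $(n+1)^2\varepsilon$; because this term can be of either sign, it contributes to both the upper and the lower bound.

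Combining the three pieces produces exactly the asymmetric estimate in the statement: the lower bound only loses the $O((n+1)^2\varepsilon)$ cross-term contribution, while the upper bound additionally picks up the $O((n+1)^2\varepsilon^2)$ contribution from the squared remainder. The main obstacle I anticipate is the bookkeeping, namely, absorbing the dimension-dependent constants coming from the $\chi^2$ moments of $\|g_t\|$, the componentwise Taylor bound on $E(g_t)$, and the trace estimate $\Tr(K) \le n\|K\|$ into the advertised $(n+1)^2$ factor, while respecting the different $\varepsilon$ scalings on the two sides. A smaller secondary concern is that a Gaussian $g_t$ has unbounded support while $\calP$ is bounded, which is handled by the same device implicit in Theorem~\ref{thm:unbiased-estimates-up-to-discretization}: the Taylor bounds are applied pathwise and the dominant contributions come from the small-$\|g_t\|$ regime captured by the moment estimates above.
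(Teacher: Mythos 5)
Your decomposition matches the paper's exactly: Taylor-expand, write the squared difference quotient as a leading term, a cross term, and a squared remainder, identify $\E[\|\nabla\calR(\theta_t)^\top g_t\|^2]/\varepsilon^2 = \Tr(K(\theta_t))$, and observe that the squared remainder is nonnegative so it only enters the upper bound. That last observation is precisely what makes the two sides asymmetric, and you have it right. The one place to be careful is your componentwise Taylor bound $|E(g_t)_i|\le C_1\|g_t\|^2$, which yields $\|E(g_t)\|^2\le C_2\,n\,\|g_t\|^4$ and hence $\E[\|E(g_t)\|^2]/\varepsilon^2 \lesssim n^2(n+2)\varepsilon^2 \sim (n+1)^3\varepsilon^2$ for the squared remainder — one power of $n$ too many compared with the advertised $(n+1)^2$. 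The paper avoids this by reading Assumption~\ref{assumption:bounded-residual-derivatives} as a uniform bound on the Hessian tensor of $\calR$ as an operator, giving $\|\mathcal{E}(\theta_t,g_t)\|\le C\|g_t\|^2$ directly (no extra $\sqrt{n}$), so $\E[\|\mathcal{E}\|^2]\le C^2\varepsilon^4 n(n+2)$. Your Cauchy--Schwarz-in-expectation treatment of the cross term is a legitimate alternative to the paper's pathwise bound $|\langle\nabla\calR^\top g_t,\mathcal{E}\rangle|\le C\|g_t\|\,\|\mathcal{E}\|$ followed by $\E[\|g_t/\varepsilon\|^3]\le n(n+2)$; with the vector-level remainder bound either route lands at $O((n+1)^2\varepsilon)$.
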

\begin{proof}
    See Appendix \ref{sec:proof-alternative-ntk-approximation}.
\end{proof}
From Theorem \ref{thm:alternative-ntk-approximation} we can see that
\begin{equation}\label{eq:alternative-ntk-approximation}
    \frac{\calR(\theta_t + g_t) - \calR(\theta_t)}{\varepsilon},\quad\text{with } g_t\sim N(0,\varepsilon^2 I_n)
\end{equation}
also provides an estimate of the trace of $K(\theta_t)$, but since it does not allow arbitrary matrix-vector products with the NTK one cannot recover a full approximation of $K(\theta_t)$. Indeed, it may be seen from the proof of Theorem \ref{thm:alternative-ntk-approximation} given in Appendix \ref{sec:proof-alternative-ntk-approximation} that one can use the standard basis vectors of $\R^n$ to obtain approximations to the diagonal entries of $K(\theta_t)$, but not other entries of the NTK. Thus, the main difference between our estimator $\widehat{K}(\theta_t)$ and \eqref{eq:alternative-ntk-approximation} is that our approach gives additional information about off-diagonal entries of $K(\theta_t)$. In terms of computational cost, the proposed estimator requires a single extra backpropagation to compute $\widehat{\theta}_t$ compared to this alternative estimator. 

\subsection{Practical Algorithm with Moving Average}

We conclude this section by presenting a practical algorithm for computing an adaptive approximation of the NTK (or its trace). We begin by noting that an accurate computation of the NTK would take many samples due to the Monte Carlo error rate noted in Theorem \ref{thm:Monte-Carlo-error}. However, doing this every iteration would result in a high computational cost, which is exactly what we wish to avoid. Instead, we propose using a moving average over training steps of our single sample NTK estimates to obtain adaptive NTK estimates. This allows for an adaptive estimate of the NTK which costs just two extra forward passes and one extra backward pass in the neural network optimization per iteration, effectively only multiplying the training time by a factor of 2.5. Indeed, computing the weights $\widehat{\theta}_{t+1}$ is the same cost of a typical gradient descent step of a single forward and backward pass, and computing $\calR(\widehat{\theta}_{t+1})$ for the finite difference approximation is an extra forward pass ($\calR(\theta_t)$ need not be recomputed). These are the main costs since they depend on the number of network parameters which is thought to be large. Now, we can first obtain a good approximation of the NTK at the initialization:
\[
    \widehat{K}_0(\theta_0) = \frac{1}{N}\sum_{j=1}^N \widehat{K}^{(j)}(\theta_0),
\]
where we denote our moving average NTK approximation at time $t$ and parameter $\theta_t$ by $\widehat{K}_t(\theta_t)$ to differentiate it from the $N$ i.i.d.\ samples $\widehat{K}^{(j)}(\theta_t)$ taken according to Algorithm \ref{alg:single-sample-approx}. Then, for a fixed parameter $\alpha\in(0,1]$ and $t>0$, we compute
\[
    \widehat{K}_t(\theta_t) = (1-\alpha)\widehat{K}_{t-1}(\theta_{t-1}) + \alpha\widehat{K}^{(1)}(\theta_t),
\]
that is, for each new training step, we compute a single new sample from Algorithm \ref{alg:single-sample-approx} and average it with the NTK approximation from the previous time step. If the NTK were constant between iterations, the moving average would effectively become a sort of Monte Carlo estimator of the NTK, approximating the NTK better as $t\to\infty$. Thus, if the NTK changes smoothly between iterations, one would expect that the moving average will adapt to well-approximate the NTK over time. The full training procedure is summarized in Algorithm \ref{alg:moving-average-approx}, and an analogous procedure may be done for the trace of the NTK.

Lastly, note that as the learning rate $\eta\to 0$, the gradient descent updates change less and less between iterations due to the fact that the loss is smooth, so the moving average should become closer to the actual NTK.

\begin{algorithm}
\caption{Moving average approximation of $K(\theta_t)$}\label{alg:moving-average-approx}
\begin{algorithmic}
\Require Assumption \ref{assumption:ntk-approximation}, $\alpha\in(0,1]$ and $N$ are given.
\Ensure $\widehat{K}_t(\theta_t) \approx K(\theta_t)$ in expectation for all $t$.
\State Initialize $\theta_0$
\State $\widehat{K}_0(\theta_0) \gets \frac{1}{N}\sum_{j=1}^N \widehat{K}^{(j)}(\theta_0)$, \Comment{$\{\widehat{K}^{(j)}(\theta_0)\}_j$ are sampled i.i.d.\ according to Algorithm \ref{alg:single-sample-approx}}
\State $\widehat{\Lambda}(\theta_0) = \left[\frac{\Tr(\widehat{K}(\theta_0))}{\Tr({\widehat{K}}_{DD}(\theta_0))}I_{n_D}, 0;\ 0,\frac{\Tr(\widehat{K}(\theta_0))}{\Tr(\widehat{K}_{BB}(\theta_0))}I_{n_B}\right]$ \Comment{As in Eq.~\eqref{eq:wang-inner-product} with $\widehat{K}_0(\theta_0)$ in place of $K(\theta_0)$}
\State $\theta_1 \gets \theta_0 - \eta\nabla\calR(\theta_0)\widehat{\Lambda}(\theta_0)\calR(\theta_0)$
\For{$t=1,2,\dots,T$}
    \State $\widehat{K}_t(\theta_t) \gets (1-\alpha)\widehat{K}_{t-1}(\theta_{t-1}) + \alpha\widehat{K}^{(1)}(\theta_t)$, \Comment{$\widehat{K}^{(1)}(\theta_t)$ is sampled according to Algorithm \ref{alg:single-sample-approx}}
    \State $\widehat{\Lambda}(\theta_t) = \left[\frac{\Tr(\widehat{K}(\theta_t))}{\Tr({\widehat{K}}_{DD}(\theta_t))}I_{n_D}, 0;\ 0,\frac{\Tr(\widehat{K}(\theta_t))}{\Tr(\widehat{K}_{BB}(\theta_t))}I_{n_B}\right]$  \Comment{As in Eq.~\eqref{eq:wang-inner-product} with $\widehat{K}_t(\theta_t)$ in place of $K(\theta_t)$}  
    \State $\theta_{t+1} \gets \theta_t - \eta\nabla\calR(\theta_t)\widehat{\Lambda}(\theta_t)\calR(\theta_t)$
\EndFor
\end{algorithmic}
\end{algorithm}

\section{Numerical Results} \label{sec:numerics}

We now present numerical results to verify our theoretical findings and demonstrate the efficacy of the proposed method for fast NTK estimation. First, in Section \ref{ne:optimization-convergence} we present a one-dimensional example exhibiting the optimization convergence rates given in Theorems \ref{thm:convergence-residuals} and \ref{thm:convergence-gradients}. Then, we present the results for fast NTK estimation in Section \ref{ne:fast-ntk-estimation}, including: (i) a quadratically parameterized predictor (Section \ref{ne:quadratically-parameterized-predictor}), comparing the results to the exact NTK and verifying the Monte Carlo error rates in Theorem \ref{thm:Monte-Carlo-error}, (ii) a PINN problem to solve a wave equation (Section \ref{ne:wave-equation-pinn}), and (iii) a multidimensional nonlinear PDE modeling liquid crystals (Section \ref{ne:q-tensor}) \cite{gudibanda-weber-yue,hirsch-weber-yue}. Numerical examples involving PINNs have been developed within the framework of the open source software PINA \cite{pina}.

\subsection{Convergence Test}\label{ne:optimization-convergence}
In this experiment, we consider the second-order and one-dimensional equation
\[
    \begin{cases}
        u_{xx}(x) = -16\pi^2\sin(4\pi x), &\quad x\in[0,1],\\
        u(0) = u(1) = 0,
    \end{cases}
\]
and we seek a PINN $u_\theta$, parameterized by $\theta$, to approximate the solution to the equation, which is exactly given by $u(x) = \sin(4\pi x)$.

For this task, we use a neural network with one hidden layer and 100 neurons, hyperbolic tangent as the activation function, initialized with Xavier strategy, and with normalized input to have zero mean and unit standard deviation. To verify the theoretical results from Theorems \ref{thm:convergence-residuals} and \ref{thm:convergence-gradients}, we will use the exact NTK in the weights given by \eqref{eq:wang-inner-product}, following Algorithm \ref{alg:adaptive-gd} with $\texttt{n}=1$. Because computing the exact NTK is costly, we use a minimal number of collocation points to verify these results, taking $n_{B_1} = n_{B_2} = 1$ collocation points at the boundary points $x=0$ and $x=1$, as well as $n_D = 2$ collocation points in $(0, 1)$. With this number of points, we see that $n = n_D + n_{B_1} + n_{B_2} \le p$, which we recall was required for the lower bound in Assumption \ref{assumption:K-bounds} to hold.
The loss from the training is shown in Figure \ref{fig:convergence-experiment-collocation-pts-loss}.

\begin{figure*}[!htbp]
    \centering
    \includegraphics[width=.48\linewidth]{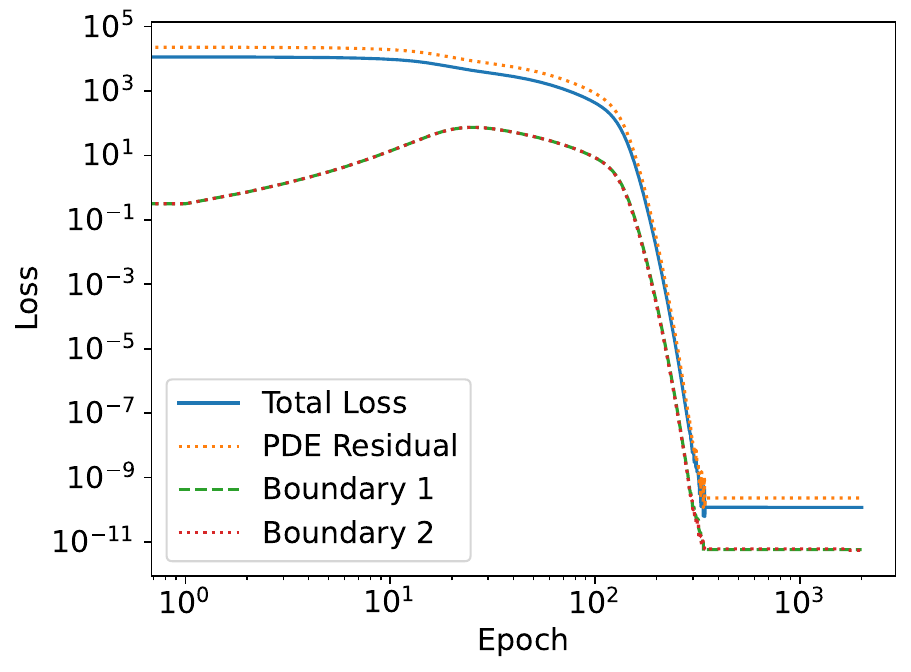}
    \caption{Convergence behavior for the total training loss, composed by equal contribution from the mean PDE residual, and the mean boundary terms.}
    \label{fig:convergence-experiment-collocation-pts-loss}
\end{figure*}

The plot in Figure \ref{fig:convergence-experiment-collocation-pts-loss} shows that the optimization task quickly minimized the loss up to machine precision. Because we take so few collocation points, we cannot expect the resulting PINN $u_\theta$ to approximate well the exact solution $u$, but this setting is already sufficient to see the optimization behavior described by our theory. Indeed, we start by verifying Assumptions \ref{assumption:lambda-bounds} and \ref{assumption:K-bounds} with the results in Figure \ref{fig:convergence-experiment-assumptions-verified}. In particular, the loss weights $\lambda_D$ and $\lambda_B$ remain bounded over the epochs as required by Assumption \ref{assumption:lambda-bounds}, and they remain stable at the end of training. Additionally, we observe that the NTK eigenvalues always increase during training, while they are no longer changing towards the end of optimization. The plot also shows a seemingly continuous evolution of the eigenvalues themselves. This is expected since the NTK is a continuous function of the neural network parameters which should also change continuously. We depicted the sorted eigenvalues of the NTK, so the colors of the eigenvalue trajectories had to be inferred by minimizing measures of distance between values and derivatives of the trajectories which we parameterized to produce what looked qualitatively correct. For example, the smallest eigenvalue at the initial time appears to evolve smoothly to become the largest eigenvalue at the final time, as shown in the blue curve.

\begin{figure*}[!htbp]
        \subfloat[Loss Weights]{%
            \includegraphics[width=.45\linewidth]{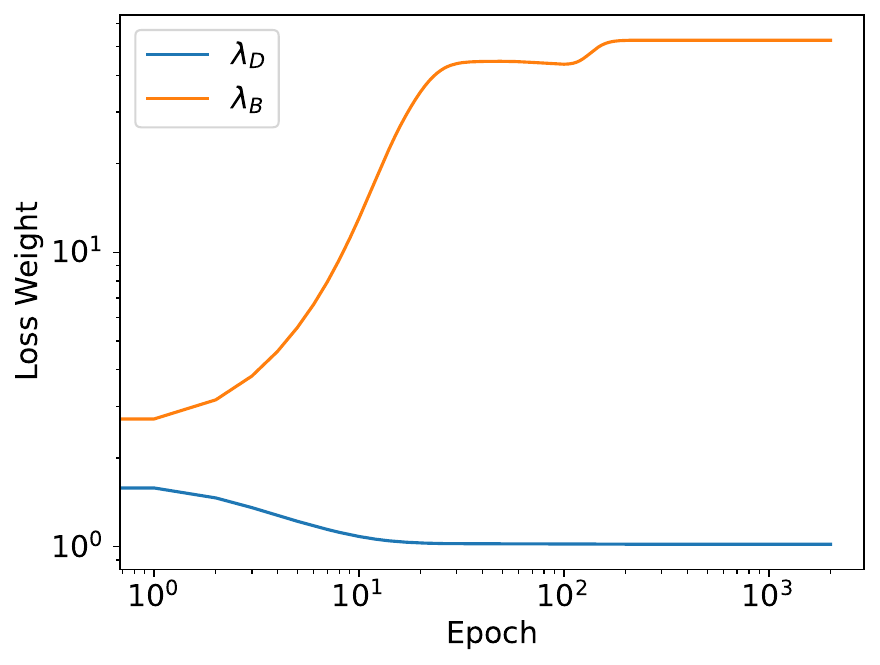}%
        }\hfill
        \subfloat[NTK Eigenvalues]{%
            \includegraphics[width=.45\linewidth]{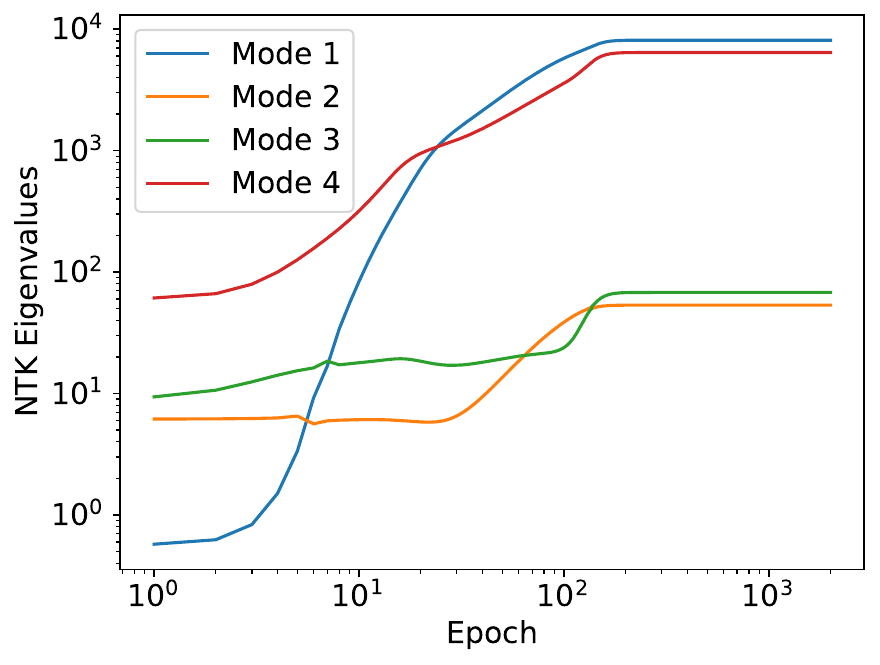}%
        }
        \caption{Loss weights and NTK eigenvalues which verify Assumptions \ref{assumption:lambda-bounds} and \ref{assumption:K-bounds}.}
        \label{fig:convergence-experiment-assumptions-verified}
\end{figure*}

To verify the predicted behavior obtained from Theorems \ref{thm:convergence-residuals} and \ref{thm:convergence-gradients}, we depict in Figure \ref{fig:convergence-experiment-time-averaged-quantities} the time averaged loss and time averaged squared norm of the gradient if the residuals, respectively. We can see that soon after the initialization the rates given by the theorems are empirically recovered during training. However, it is interesting to note that this rate is only observed after the losses appear to begin to decrease significantly (see Figure \ref{fig:convergence-experiment-collocation-pts-loss}). Indeed, the loss components appear to be more effectively minimized around epoch 100, when at the same time the NTK eigenvalues stabilize, eventually reaching machine precision at about epoch 200. These results correspond to the expected decay in Figure \ref{fig:convergence-experiment-time-averaged-quantities}, where around epoch 100 we begin to observe the $O(T^{-1})$ rate for the time averaged quantities obtained from our theoretical results. In summary, it seems that the NTK eigenvalues were increasing without bound until epoch 100, at which point the eigenvalues and gradient directions could stabilize. Then, the eigenvalues remained bounded, and our theory could be applied. However, the loss also exhibited a decay faster than $O(T^{-1})$, so this may indicate that the loss landscape was approximately convex, allowing faster convergence.

\begin{figure*}[!htbp]
        \subfloat[Average Loss]{%
            \includegraphics[width=.45\linewidth]{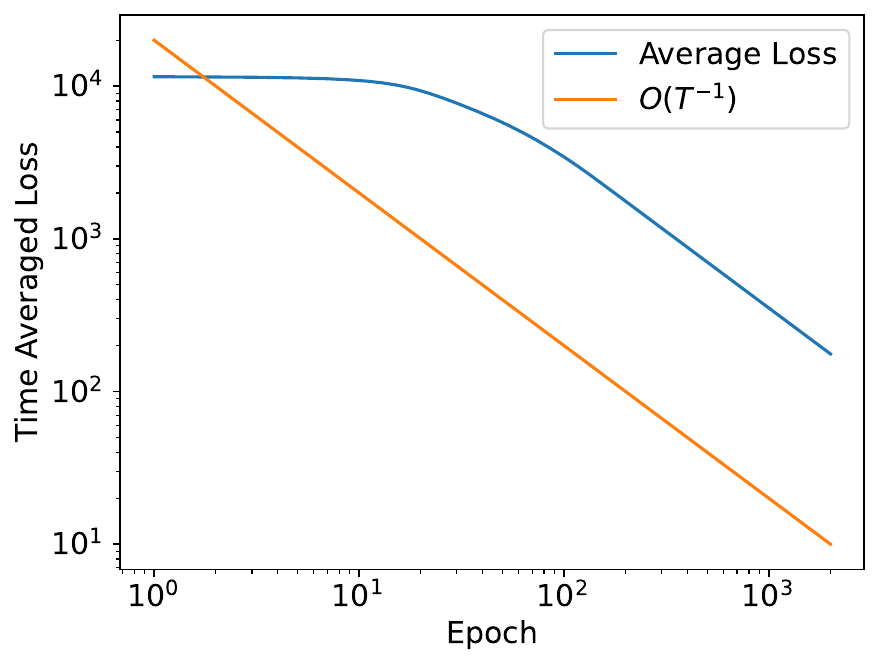}%
        }\hfill
        \subfloat[Average Gradient of Residuals]{%
            \includegraphics[width=.45\linewidth]{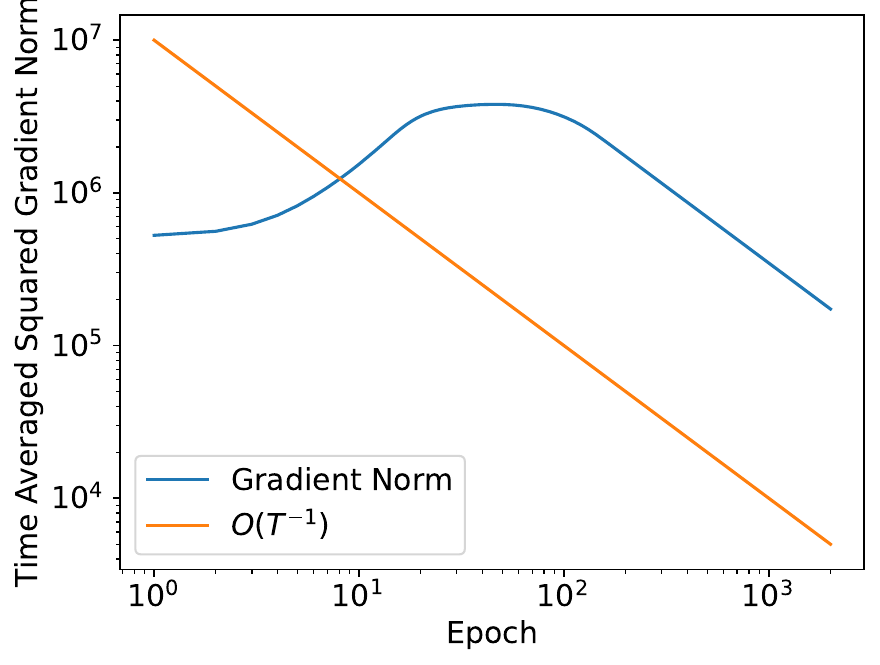}%
        }
        \caption{Convergence rates of the time averaged loss and the time averaged norm of gradient of the residuals.}
        \label{fig:convergence-experiment-time-averaged-quantities}
\end{figure*}

\subsection{Fast NTK Estimation}\label{ne:fast-ntk-estimation}

Here, we aim to demonstrate the behavior of the proposed algorithm for estimating the NTK and its trace, as well as our approximation of the NTK-based PINN weights.

\subsubsection{Quadratically parameterized predictor}\label{ne:quadratically-parameterized-predictor}
In this example, we verify the theoretical error rate that we obtained for the NTK approximation with respect to the number of samples taken. This example is a simple regression problem, i.e.\ not in the physics-informed setting, with a quadratically parameterized predictor. The setup is as follows: we take 50 equispaced values in $[-1, 1]$ denoted by $\{x_i\}_{i=1}^{50}$ and sample the data as
\[
    y_i = \pi x_i^2 + ex_i + \sqrt{2} + \xi_i,\quad \text{where}\quad \xi_i \overset{\text{iid}}{\sim} N(0, 1/\sqrt{2}),
\]
where the ``true" output is thus given by $$y = f(x) \doteq \pi x^2 + ex + \sqrt{2}.$$ We aim to learn an approximation $\widehat{f}$ of $f$ from the pairs $\{(x_i, y_i)\}_{i=1}^{50}$. By defining the feature map as $u(x) = (1, x, x^2)$, we can write $f(x) = (\theta^\ast\odot\theta^\ast)\cdot u(x)$, where $\theta^\ast = (\pi^{1/2}, e^{1/2}, 2^{1/4})$ and $\odot$ denotes element-wise multiplication. Our goal is to learn $\widehat{\theta} \in \mathbb{R}^3$ such that $\widehat{f}(x) = (\widehat{\theta}\odot\widehat{\theta})\cdot u(x) \approx f(x)$. 

Let us define the residual $\mathcal{R}(\theta)\in\mathbb{R}^{50}$ so that 
\[
    \mathcal{R}(\theta)_i = (\theta\odot\theta)\cdot u(x_i) - y_i, \quad \text{for}\quad i=1,2,\dots,50,
\]
so that can then compute $\nabla\mathcal{R}(\theta) \in \mathbb{R}^{3\times 50}$ exactly as 
\[
    \nabla\mathcal{R}(\theta)_{ji} = 2\theta_j u(x_i)_j.
\]
We note that we parameterized our predictor in this precise way so that $\nabla\calR(\theta)$ would depend on $\theta$. Indeed, an even simpler linearly parameterized predictor would result in a neural tangent kernel that is independent of $\theta$. The data, exact function $f$, and predictor $\widehat{f}$ found via Algorithm \ref{alg:moving-average-approx} can be seen in Figure \ref{fig:ntk-quadratic-experiment-data}. As expected, obtaining a reasonable estimator through gradient descent in this example is easy.

\begin{figure}[!htbp]
    \centering
    \includegraphics[width=0.45\linewidth]{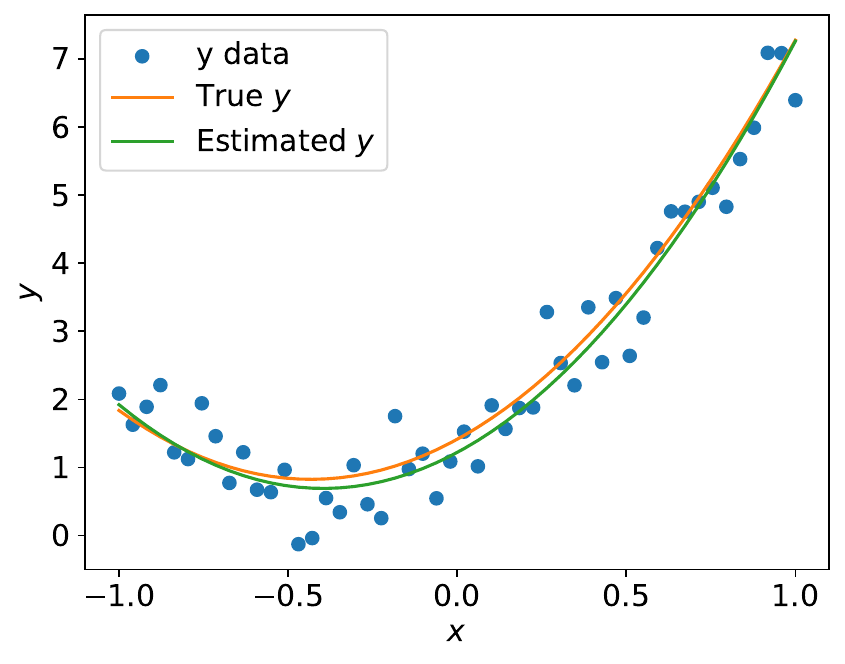}
    \caption{Quadratically parameterized predictor, sampled data, and true output.}
    \label{fig:ntk-quadratic-experiment-data}
\end{figure}

The main goal of this section is to verify the theoretical results obtained in Theorems \ref{thm:unbiased-estimates-up-to-discretization} and \ref{thm:Monte-Carlo-error}. To verify Theorem \ref{thm:unbiased-estimates-up-to-discretization}, we note that, by the law of large numbers, a Monte Carlo approximation of the expected value of our estimator at time 0 will be approximately the expected value of the estimator at that time:
\[
    \E\left[\widehat{K}(\theta_0)\right] \approx \frac{1}{N}\sum_{j=1}^N \widehat{K}^{(j)}(\theta_0).
\]
In Figure \ref{fig:ntk-quadratic-time-0-estimator}, we show a sequence of such estimators, where the final estimate with $N = 2\times 10^4$ samples exhibits an approximate NTK that pretty well matches the exact one at time 0. We note, however, that to obtain an accurate estimate of the NTK, we require a large number of samples. Despite the fact that, exploiting only one sample, the NTK estimate does not look like the exact one, we observe that the estimate with 2000 samples already provides a good qualitative approximation, even though it still contains some artifacts. To quantitatively explore this phenomenon, we now consider our theoretical results for the sequence of estimates.

\begin{figure*}[!htbp]
        \subfloat[Exact NTK at time 0]{%
            \includegraphics[width=.4\linewidth]{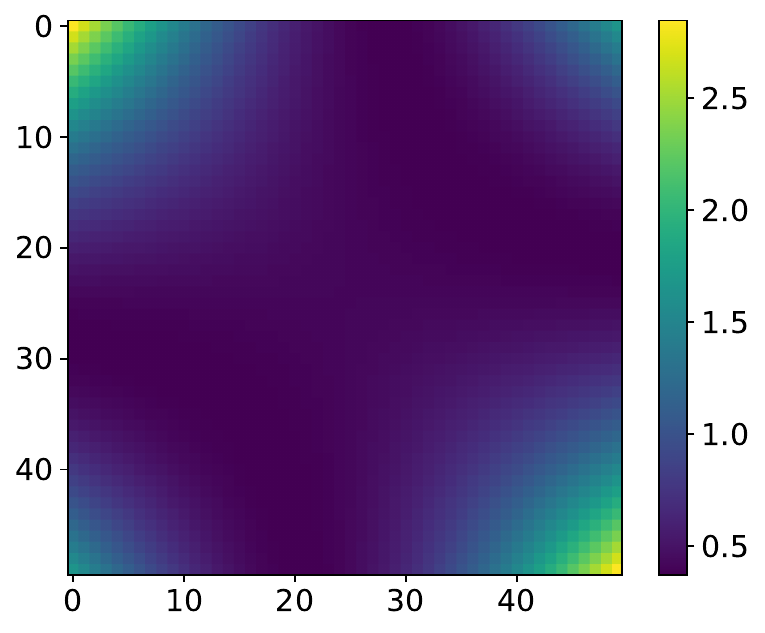}%
        }\hfill
        \subfloat[Estimate of NTK at time 0 (1 Sample)]{%
            \includegraphics[width=.4\linewidth]{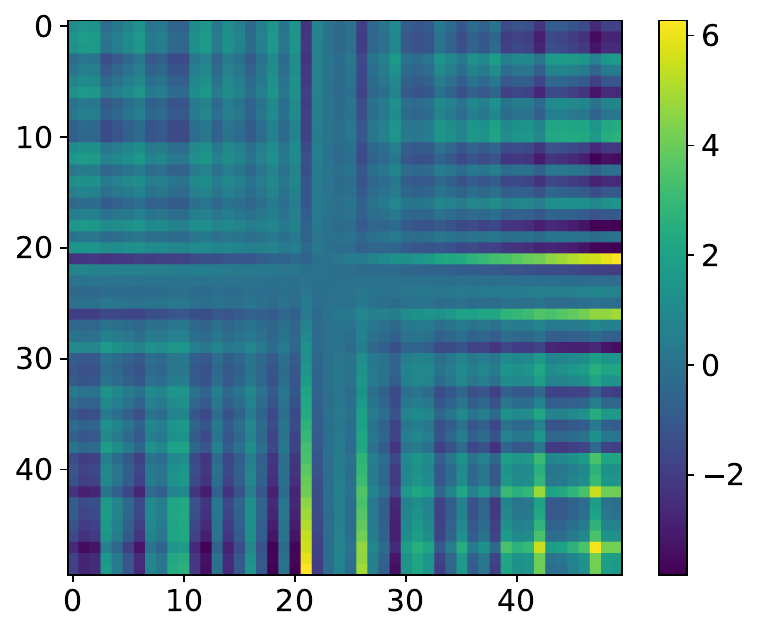}%
        }\\
        \subfloat[Estimate of NTK at time 0 (2000 Samples)]{%
            \includegraphics[width=.4\linewidth]{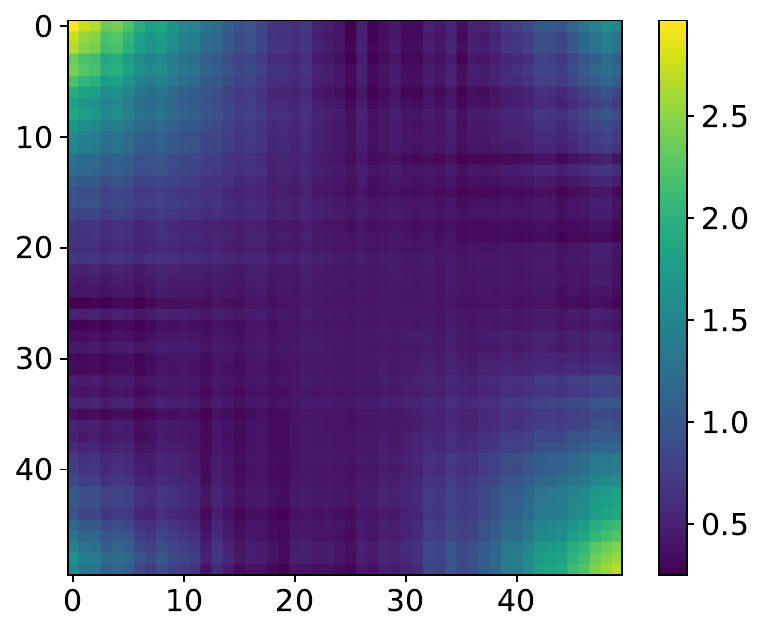}%
        }\hfill
        \subfloat[Estimate of NTK at time 0 (20000 Samples)]{%
            \includegraphics[width=.4\linewidth]{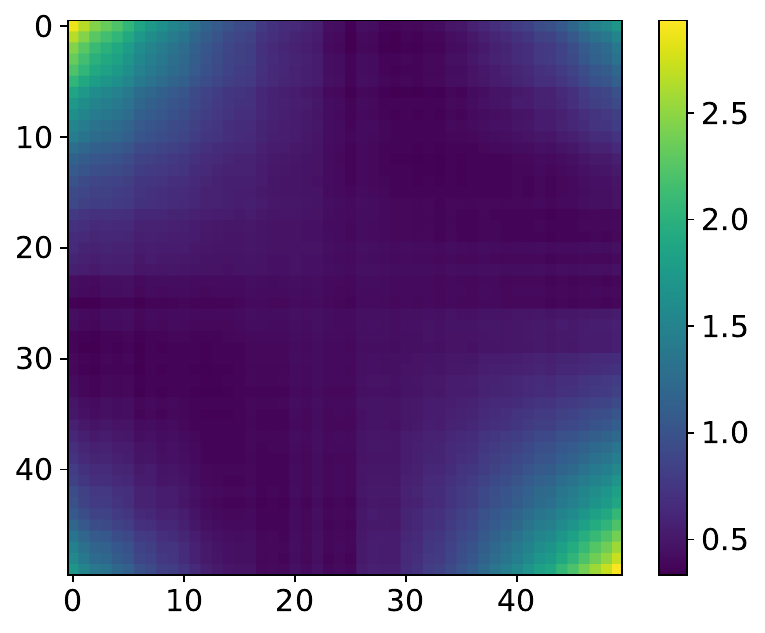}%
        }
        \caption{Approximation of NTK at a fixed time for differing numbers of samples.}
        \label{fig:ntk-quadratic-time-0-estimator}
\end{figure*}

To verify the Monte Carlo error rate in Theorem \ref{thm:Monte-Carlo-error}, we compute empirical averages of errors for $M=100$ independent Monte Carlo estimates of the NTK with varying $N$:
\[
    \E\left[\left\|\frac{1}{N}\sum_{j=1}^N\widehat{K}^{(j,N)}(\theta_0) - K(\theta_0)\right\|_F^2\right] \approx \frac{1}{M}\sum_{i=1}^M \left\|\frac{1}{N}\sum_{j=1}^N\widehat{K}^{(j,N)}(\theta_0) - K(\theta_0)\right\|_F^2,
\]
where the superscript $(j,N)$ denotes the fact that each $\widehat{K}^{(j,N)}$ is an i.i.d.\ sample from Algorithm \ref{alg:single-sample-approx} for each $j$ and $N$. An analogous approximation is done for the Monte Carlo trace estimator of the NTK. The results are shown in Figure \ref{fig:ntk-quadratic-monte-carlo-rate}, and, as expected, we observe a rate of the order $O(N^{-1})$ for the error, according to Theorem \ref{thm:Monte-Carlo-error}. We note that while the error may seem large, this is because the error is computed for the entire matrix with $2500$ entries, resulting in a small average error per entry.

\begin{figure*}[!htbp]
        \subfloat[NTK Monte Carlo error]{%
            \includegraphics[width=.48\linewidth]{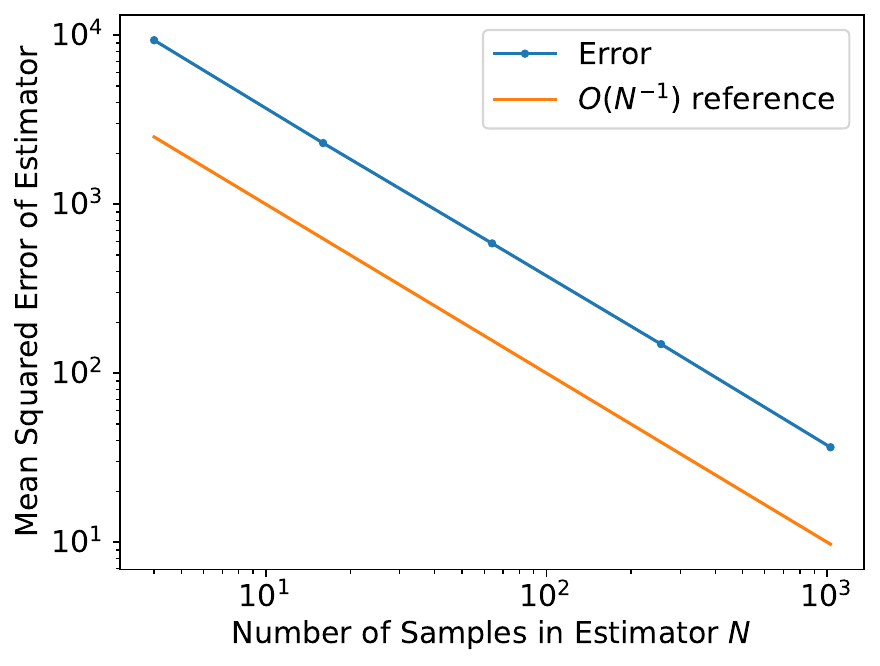}%
        }\hfill
        \subfloat[Trace NTK Monte Carlo error]{%
            \includegraphics[width=.48\linewidth]{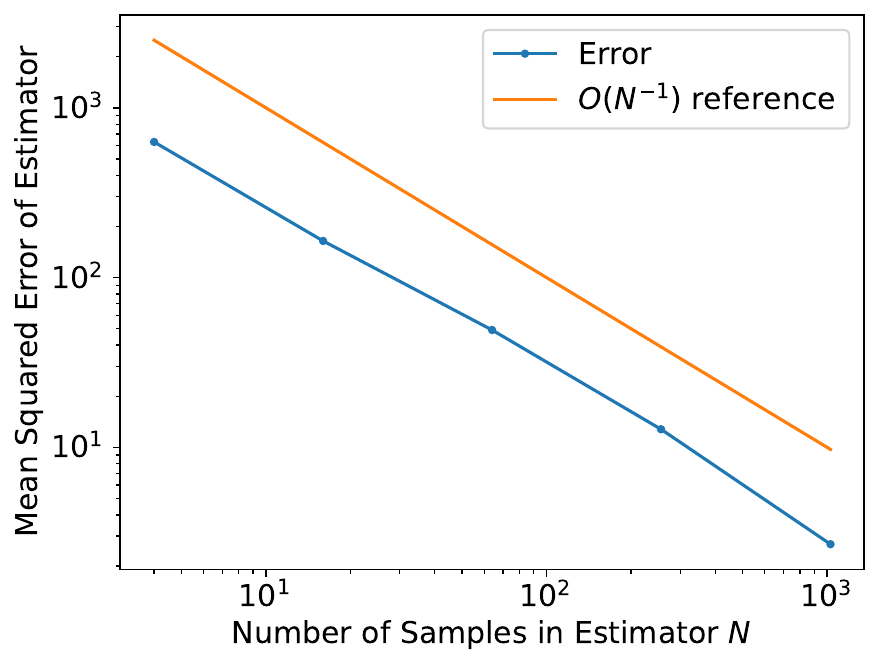}%
        }
        \caption{Monte Carlo error rates of efficient NTK and NTK trace estimators.}
        \label{fig:ntk-quadratic-monte-carlo-rate}
\end{figure*}

Finally, in Figure \ref{fig:ntk-quadratic-moving-average-comparison} we provide a comparison between the exact and estimated NTKs at the initial and final time $T = 10^5$ after applying Algorithm \ref{alg:moving-average-approx} with $\alpha = 10^{-4}$. We see that at the initial time, the upper right and lower left corners of the exact NTK have larger values than at the final time. The NTK approximation reflects this, showing that the moving average approximation of the NTK is able to adapt to a changing NTK over time. We will further demonstrate this adaptivity via the loss weights in the next experiments.

\begin{figure*}[!htbp]
        \subfloat[Exact NTK at time 0]{%
            \includegraphics[width=.4\linewidth]{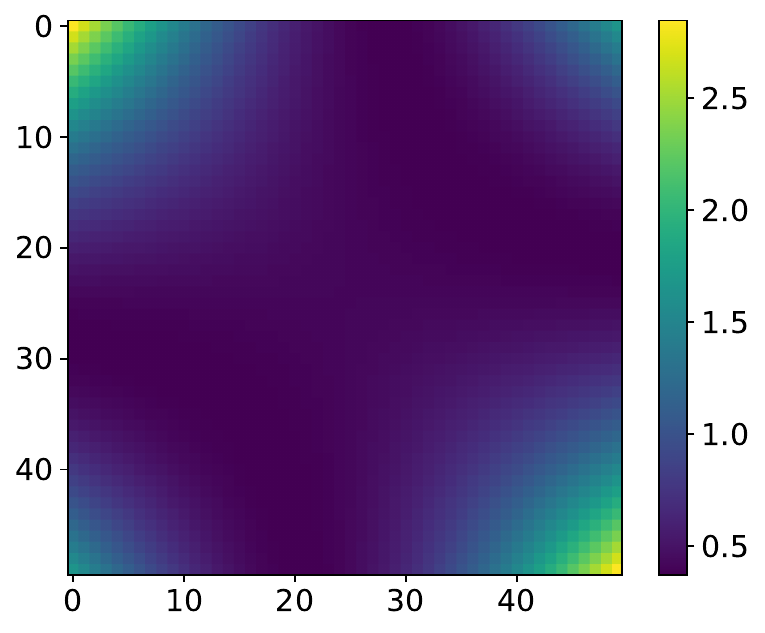}%
        }\hfill
        \subfloat[Estimate of NTK at time 0]{%
            \includegraphics[width=.4\linewidth]{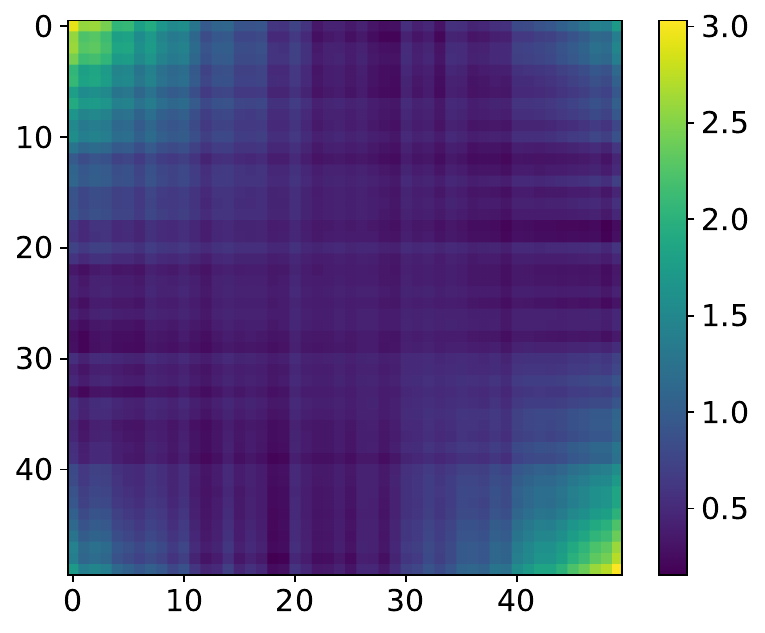}%
        }\\
        \subfloat[Exact NTK at final time]{%
            \includegraphics[width=.4\linewidth]{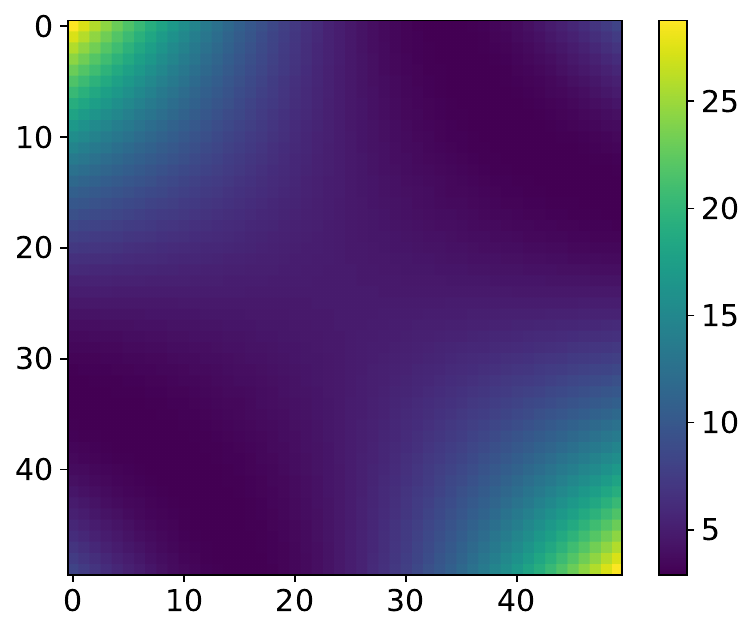}%
        }\hfill
        \subfloat[Estimate of NTK at final time]{%
            \includegraphics[width=.4\linewidth]{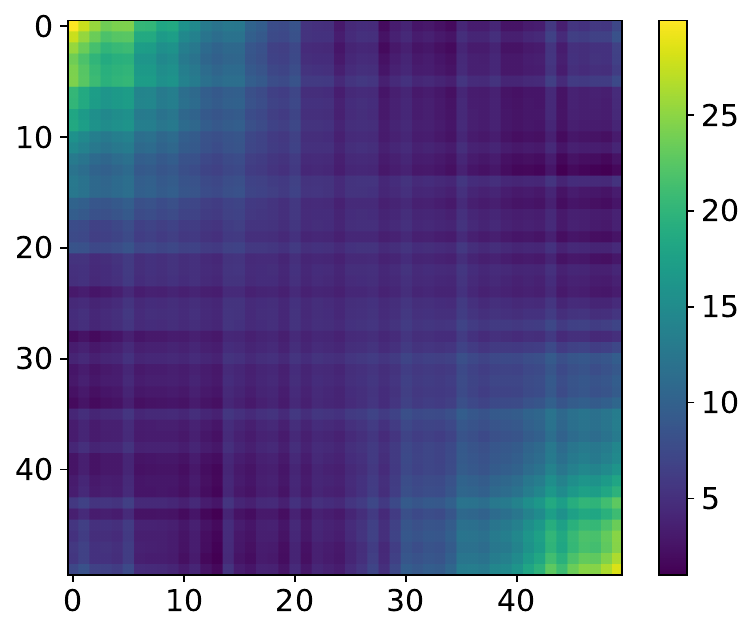}%
        }
        \caption{Approximation of NTK with evolution in time.}
        \label{fig:ntk-quadratic-moving-average-comparison}
\end{figure*}

\subsubsection{Wave Equation with PINNs}\label{ne:wave-equation-pinn}
Here, we aim to show the comparison between the efficient computation of the weights with respect to the exact NTK ones \eqref{eq:wang-inner-product}. We consider the following problem: find a neural network $u_\theta(t, x)$ with parameters $\theta$ that approximately solves the wave equation
\begin{equation}
    \begin{cases}
        u_{tt}(x, t) - 4u_{xx}(x,t) = 0, &(x, t) \in (0,1)\times(0,1), \\
        u(x, 0) = \sin(\pi x) + \frac12\sin(4\pi x), &x\in [0,1],\\
        u(0, t) = u(1, t) = 0, &t\in[0, 1],\\
        u_t(x, 0) = 0, &x\in[0,1].
    \end{cases}
\end{equation}
To obtain an accurate approximation of the physical phenomenon, within this benchmark, we further resample training points at each training iteration, using $n_D=300$ collocation points for the PDE residual, $n_{D_i}=300$ points in space for the initial time derivative, $n_{B_i}=100$ points in space for the initial condition, and $n_{B_1}=n_{B_2}=100$ points in space for each of the boundaries. Resampling is a common technique that was also used for the original presentation of the NTK-PINN loss weighting algorithm \cite{WangNTK2022}, considering a fixed training batch size sampled from the infinitely many points in the space-time domain. This leads to a smaller generalization error compared to using a fixed training set of the same size. So instead of fixing the points at which we compute the residuals to form the vector $\calR$, we now sample new points at each training step $t$, resulting in a newly defined residual denoted as $\calR_t(\theta)$. Thus, we now effectively approximate $\E[\nabla\calR_t(\theta_t)^\top\nabla\calR_t(\theta_t)]$, instead of $\nabla\calR_t(\theta_t)^\top\nabla\calR_t(\theta_t)$, and we use it in place of $K(\theta_t)$ in \eqref{eq:wang-inner-product}. By applying Algorithm \ref{alg:moving-average-approx} with $\alpha=10^{-3}$, we obtain the resulting loss weights depicted in Figure \ref{fig:wave-experiment-ntk-weights}. In general, we observed that the performance of the estimator is fairly robust to the choice of $\alpha$, and smaller $\alpha$ lead to smoother loss weight trajectories. In particular, $\alpha=10^{-4}$ is small enough to see the loss weight trajectories without being obscured by noise in the random estimates of the NTK.

\begin{figure}[!htbp]
    \centering
    \includegraphics[width=0.65\linewidth]{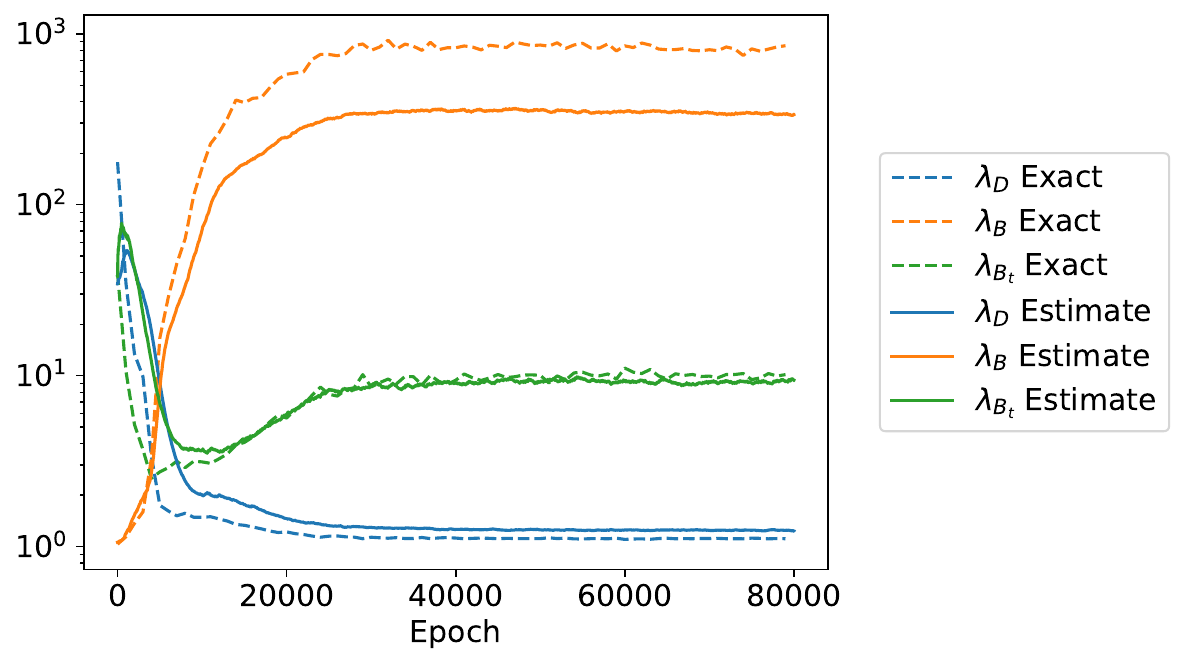}
    \caption{Exact NTK-based loss weights (shown with dashed lines) given by \eqref{eq:wang-inner-product}, and the loss weights based on the approximated NTK (shown with solid lines) given by Algorithm \ref{alg:moving-average-approx} combined with resampled data.}
    \label{fig:wave-experiment-ntk-weights}
\end{figure}

We see that the exact and approximated loss weights exhibit a very similar behavior, though the largest loss weight is under-approximated, while the smallest is over-approximated. Note that because the loss weights involve the quotient of random variables, even though our trace estimator is unbiased up to a negligible discretization error, the results obtained by replacing $K$ in \eqref{eq:wang-inner-product} with its randomized approximations are not unbiased. Despite this discrepancy, the resulting PINN approximation to the wave equation, as shown in Figure \ref{fig:wave-equation-solution}, has a low error compared to the exact solution given by
\[
    u(x,t) = \sin(\pi x)\cos(2\pi t) + \frac12 \sin(4\pi x)\cos(8\pi t).
\]
To obtain these results, we used a feedforward neural network with three hidden layers and 500 neurons each, Xavier initialization, and the hyperbolic tangent as the activation function.

\begin{figure}[!htbp]
    \centering
    \includegraphics[width=0.95\linewidth]{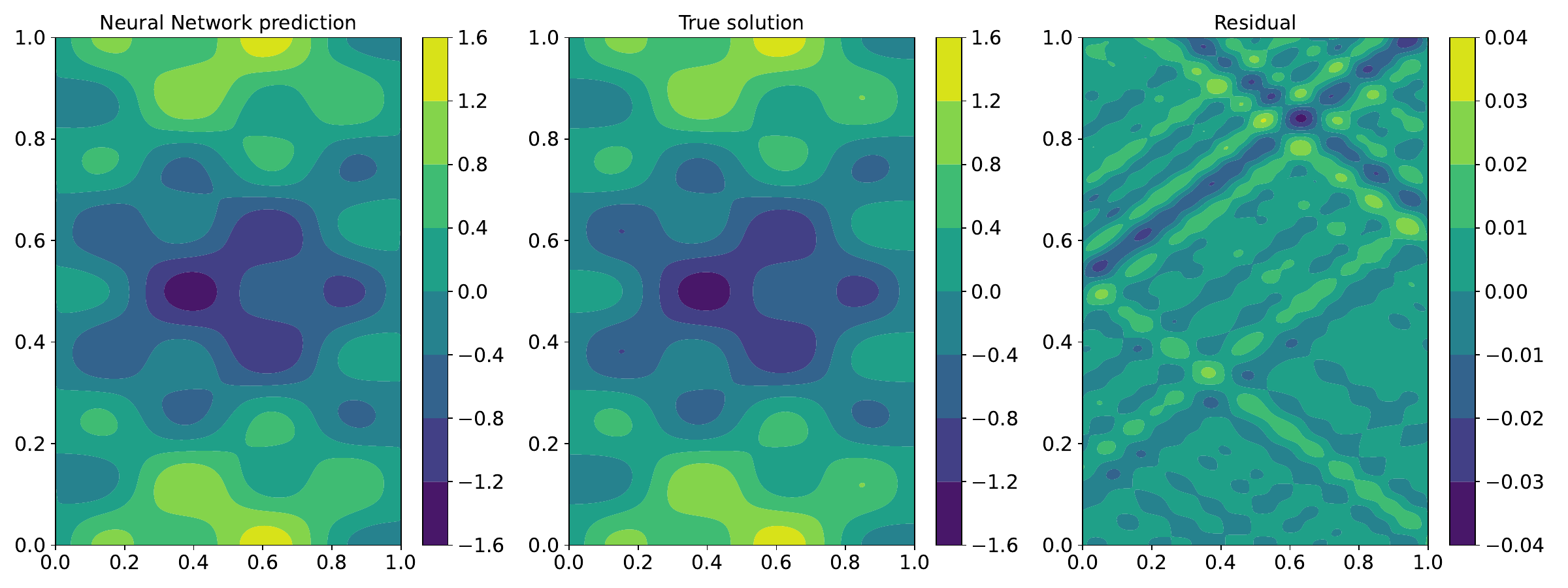}
    \caption{Wave equation solution obtained using the loss weighting Algorithm \ref{alg:moving-average-approx} combined with resampled data.}
    \label{fig:wave-equation-solution}
\end{figure}

\subsubsection{Liquid Crystal Q-Tensor PINN}\label{ne:q-tensor}
In this final example, we show the efficacy of our weighting algorithm for a nonlinear PDE which describes the evolution of liquid crystal molecule orientations. The model in two spatial dimensions is given by
\[
    \begin{cases}
        Q_t = L\Delta Q - aQ - c\Tr(Q^2)Q, &(\x, t) \in \Omega\times[0,T],\\
        Q(\x, 0) = Q_0(\x), & \x\in\Omega,\\
        Q(\x, t) = 0, &(t, \x) \in \partial\Omega\times[0,T],
    \end{cases}
\]
where $\Omega\subseteq\R^2$ and $Q:\overline{\Omega}\times[0,T]\to \R^{2\times 2}$ is a trace-free and symmetric matrix-valued function referred to as the Q-tensor \cite{gudibanda-weber-yue,hirsch-weber-yue}. Additionally, $a\in\R$ and $L,c>0$ are parameters that characterize the behavior of the solution, and because $Q$ is trace-free and symmetric, there are only two independent variables in this equation. The initial condition is given by
\[
    Q_0(x, y) = dd^\top - \frac{\|d\|^2}{2}I,\quad d = \frac{\begin{bmatrix} (2-x)(2+x)(2-y)(2+y) & \sin(\pi x)\sin(\pi y)\end{bmatrix}^\top}{\sqrt{1 + [(2-x)(2+x)(2-y)(2+y)]^2 + \sin(\pi x)^2\sin(\pi y)^2}}.
\]

The eigenvectors of the function $Q$ encode the average direction of the liquid crystal molecules at a point in space, and depending on the direction of the molecules, light may or may not pass through the liquid crystal. The light passing through the liquid crystal can be seen by a contour plot of $|\lambda_1(\x,t)(v_1(\x,t)\cdot(1,0))|$, where $v_1(\x,t)$ is the eigenvector corresponding to the positive eigenvalue of $Q(\x,t)$. 

As in the previous example, we train a PINN model $Q_\theta$ with our adaptive weighting algorithm to approximate the solution to the Q-tensor PDE, using the same feedforward neural network with three hidden layers and 500 neurons each. We sample $n_D=n_{B_i}=300$ data points each for the interior residual and for the initial condition, while for each portion of the boundaries we use $n_{B_k}=10$ data samples for $k=1,2,3,4$, still resampling the points at each iteration of training. 

In Figure \ref{fig:q-tensor-solution}, we plot the resulting PINN (denoted NTK-PINN) compared to a reference Finite Element solution computed with the invariant quadratization method \cite{hirsch-weber-yue}, as well as compared to a PINN trained without the adaptive weighting algorithm (denoted PINN). The contours show the aforementioned visualization of light passing through the liquid crystal, while the blue vectors represent the eigenvectors $v_1(\x,t)$. We see that all three solutions are qualitatively very similar, providing an accurate reconstruction of the contour plots and of the eigenvectors reaching the horizontal equilibrium state at time $t=2$. We note that the artifacts around the boundary for the NTK-PINN and PINN solutions are due to the fact that $Q=0$ on the boundary, so computing the eigenvectors will be sensitive to any error, while for the Finite Element solution, $Q$ is constrained to equal 0 exactly on the boundary. Thus, even though the use of the adaptive loss weights makes little difference in the quality of solution for this example, we emphasize that the Algorithm \ref{alg:moving-average-approx} can offer enough improvement in computational efficiency to make such an adaptive NTK-based loss weighting feasible.

\begin{figure}[!htbp]
\begin{center}
\begin{tikzpicture}
    \draw (5.1, 3) node {\textbf{FEM}};
    \draw (0, 3) node {\textbf{NTK-PINN}};
    \draw (-5, 3) node {\textbf{PINN}};
    \draw (5.6, 0) node[inner sep=0] {\includegraphics[scale=0.416]{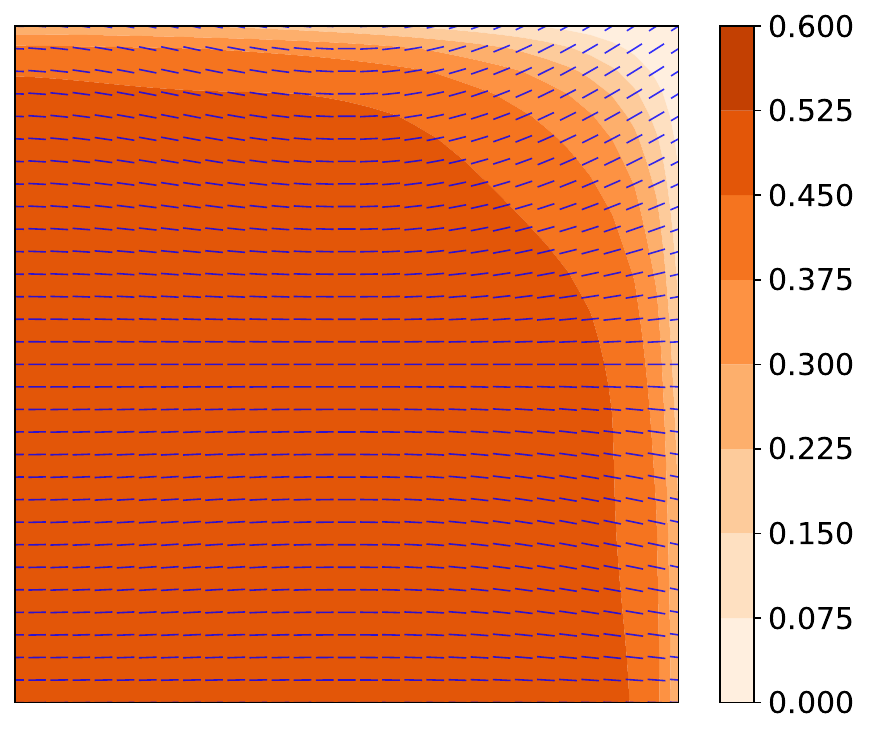}};
    \draw (0, 0) node[inner sep=0] {\includegraphics[scale=0.4]{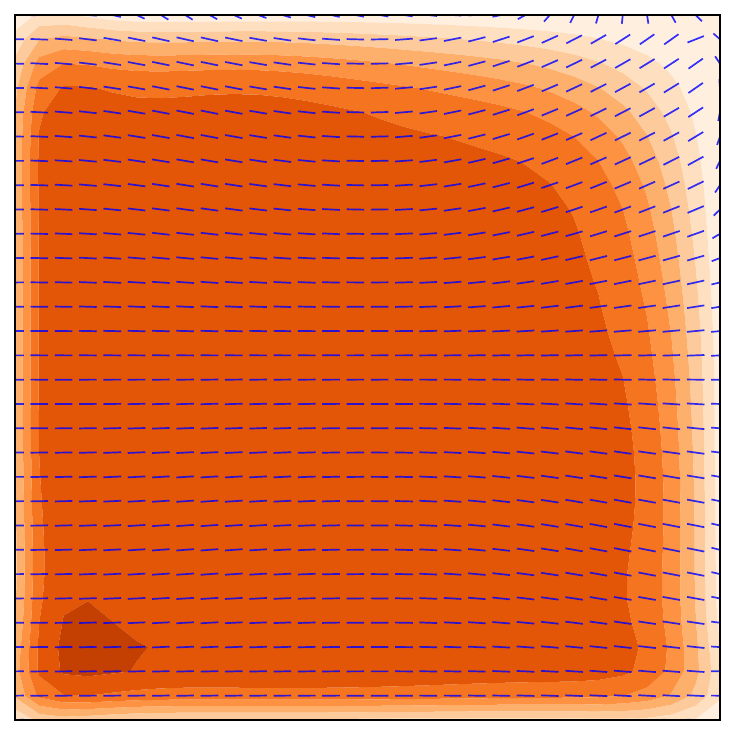}};
    \draw (-5, 0) node[inner sep=0] {\includegraphics[scale=0.4]{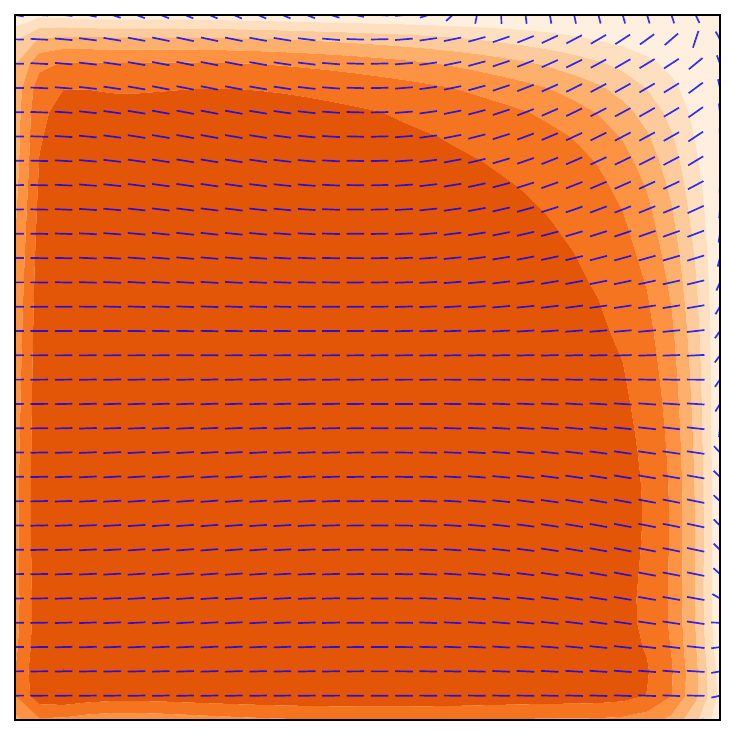}};
    \draw (0, -2.75) node {Solutions at $t=0$};
    \draw (5.6, -6) node[inner sep=0] {\includegraphics[scale=0.416]{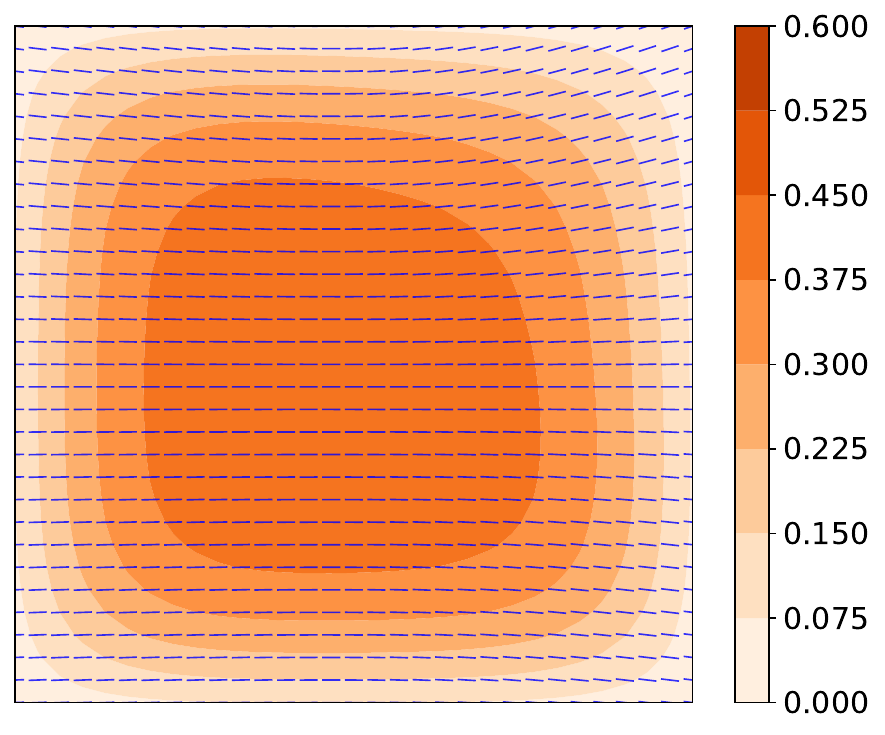}};
    \draw (0, -6) node[inner sep=0] {\includegraphics[scale=0.4]{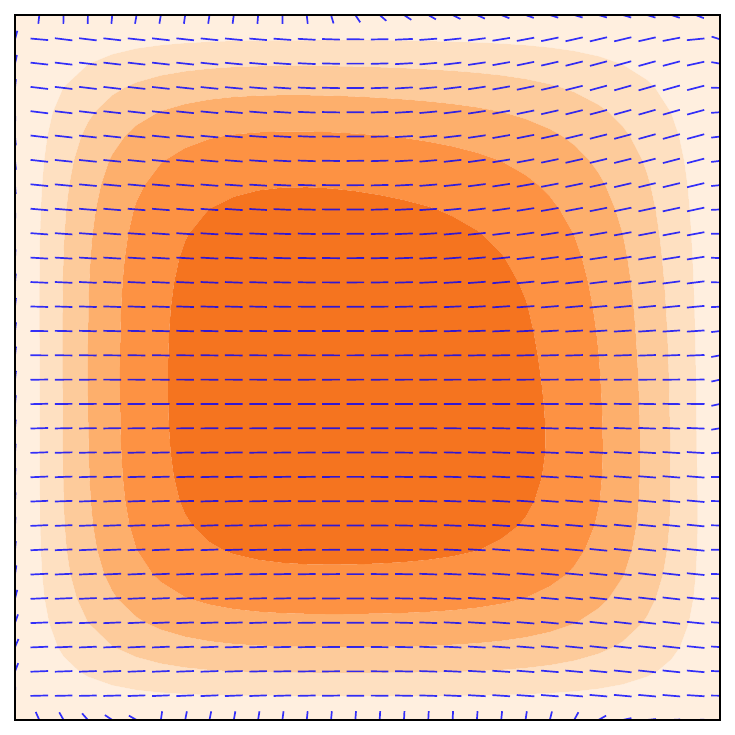}};
    \draw (-5, -6) node[inner sep=0] {\includegraphics[scale=0.4]{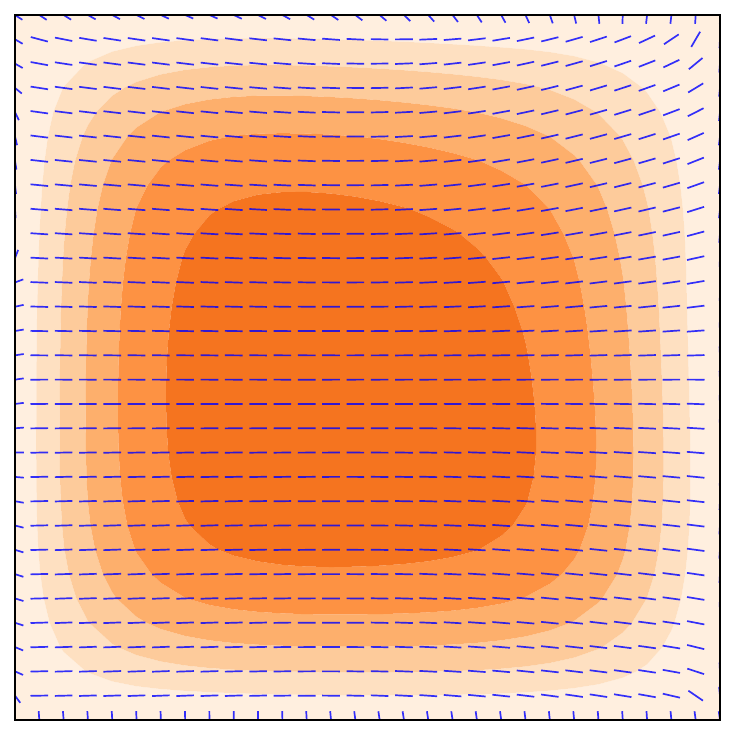}};
    \draw (0, -8.75) node {Solutions at $t=0.5$};
    \draw (5.6, -12) node[inner sep=0] {\includegraphics[scale=0.416]{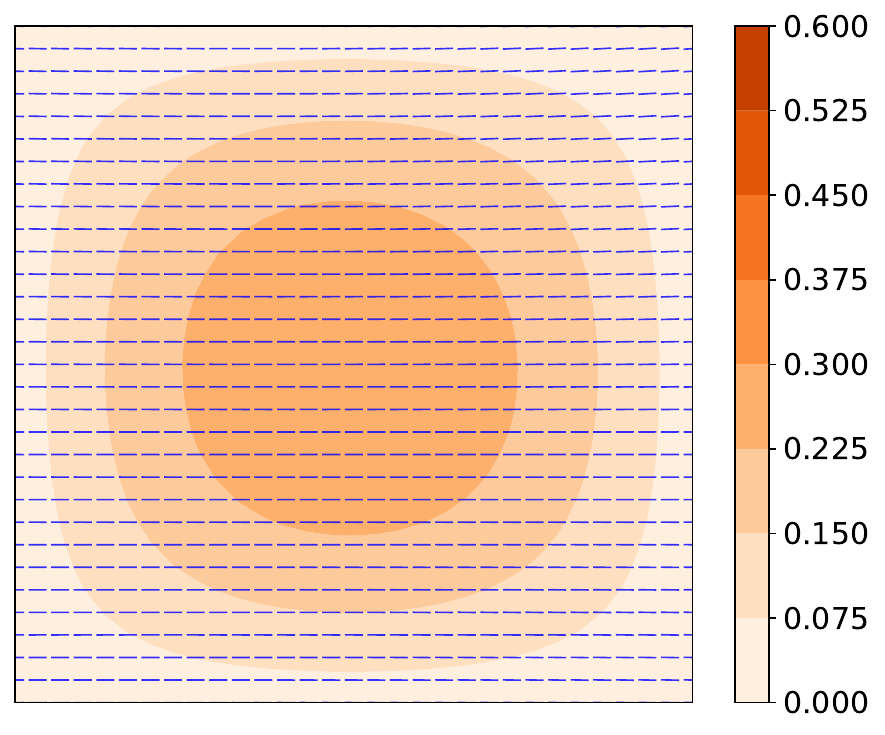}};
    \draw (0, -12) node[inner sep=0] {\includegraphics[scale=0.4]{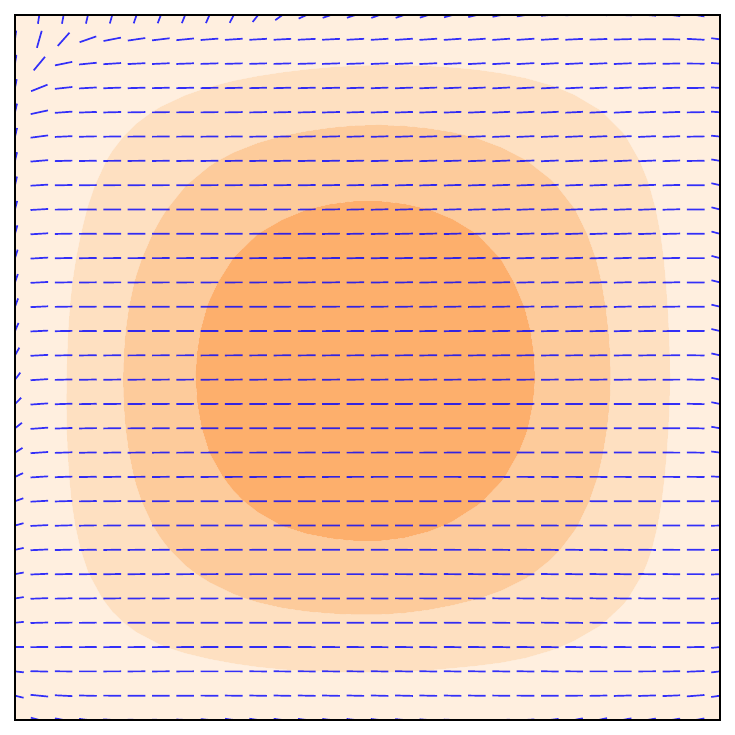}};
    \draw (-5, -12) node[inner sep=0] {\includegraphics[scale=0.4]{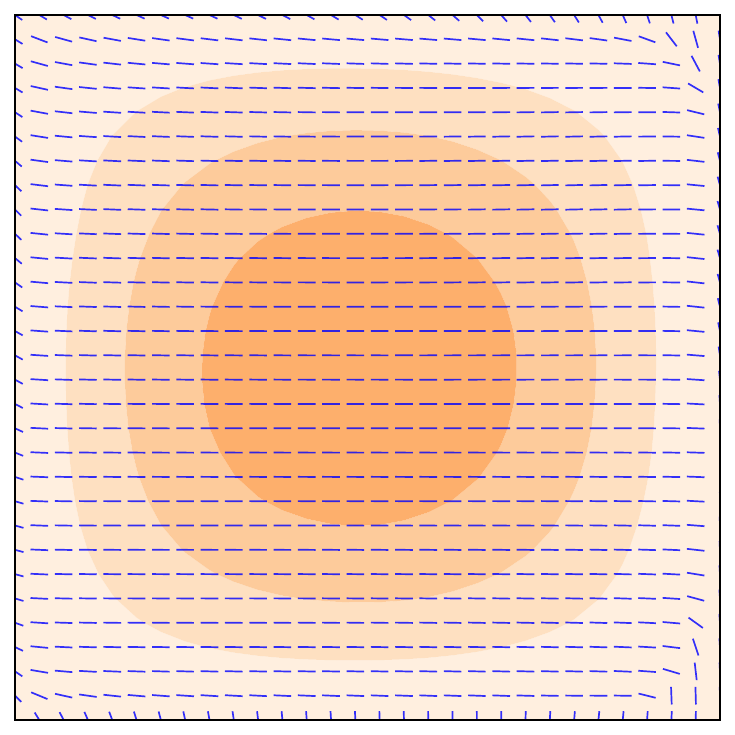}};
    \draw (0, -14.75) node {Solutions at $t=2$};
    \end{tikzpicture}
\end{center}
\caption{Finite Element solution (FEM) compared to a PINN solution trained with the approximate NTK weight algorithm (NTK-PINN) and a PINN solution trained without adaptive weighting (PINN).}
\label{fig:q-tensor-solution}
\end{figure}

\section{Conclusion}
In this paper, we presented a convergence analysis for a neural tangent kernel-based loss weighting scheme in multi-objective optimization with a physics-informed loss. We showed that, under certain assumptions, the average squared norms of the residuals and of the loss gradients converge to zero. To address concerns with computational complexity, we further proposed a randomized algorithm which allowed us to obtain sketches of the NTK. This way, we demonstrated that, using a moving average, computing adaptive NTK approximations only costs one additional evaluation and backpropagation of our physics-informed neural network per iteration. We provided error estimates to justify the proposed randomized algorithm, and we verified our theoretical results numerically. We performed further numerical tests demonstrating the efficacy of our methodology for the wave equation and a nonlinear model of liquid crystal dynamics. In the future, we would also like to study the convergence of PINN training in the infinite width limit, particularly for nonlinear PDEs. Another interesting open question is related to the discovery of the optimal weights used to balance the loss functions. Lastly, an important application of our work will be to physics-informed machine learning for reduced order modeling. In this context, the neural networks used are vector-valued, and computing the loss weights based on the NTK is particularly expensive, so we envision our NTK approximation algorithm having great benefits in that context \cite{ChenHesthaven2021,HirschNeuralEmpiricalInterpolation2024}.

\section*{Acknowledgments}

This material is based upon work supported by the National Science Foundation Graduate Research Fellowship Program under Grant No.\ DGE 2146752. Any opinions, findings, and conclusions or recommendations expressed in this material are those of the authors and do not necessarily reflect the views of the National Science Foundation. \textbf{FP} acknowledges the support provided by the European Union - NextGenerationEU, in the framework of the iNEST - Interconnected Nord-Est Innovation Ecosystem (iNEST ECS00000043 - CUP G93C22000610007) consortium and its CC5 Young Researchers initiative. \textbf{FP} also acknowledges INdAM-GNSC and the project ``Sviluppo e analisi di modelli di ordine ridotto basati su tecniche di deep learning" (CUP E53C24001950001) for its support.

\appendix

\section{Proof of Descent Lemma \ref{lem:descent-lemma}}\label{sec:proof-descent-lemma}
\begin{proof}
    Let $g(t) = F(y + t(x-y))$, which is well-defined since $\calP$ is convex. Then $g'(t) = \langle \nabla F(y+t(x-y)), x-y\rangle$. By the fundamental theorem of calculus, we have
    \[
        F(x) - F(y) = g(1) - g(0) = \int_0^1 g'(t)\, dt = \int_0^1 \langle\nabla F(y+t(x-y)), x-y\rangle\, dt.
    \]
    Subtract $\langle\nabla F(y), x-y\rangle$ from both sides of this equation and take absolute values to obtain
    \begin{align*}
        \lvert F(x) - F(y) - \langle \nabla F(y), x-y\rangle \rvert &\le \int_0^1 \lvert \langle \nabla F(y+t(x-y)) - \nabla F(y), x-y\rangle\rvert\, dt\\
        &\le \int_0^1 \|\nabla F(y+t(x-y)) - \nabla F(y)\|\cdot\|x-y\|\, dt\\
        &\le \int_0^1 Lt\|x-y\|^2\, dt\\
        &= \frac{L}{2}\|x-y\|^2,
    \end{align*}
    which completes the proof.
\end{proof}

\section{Proof of Theorem \ref{thm:alternative-ntk-approximation}}\label{sec:proof-alternative-ntk-approximation}

\begin{proof}
    Performing a Taylor expansion, we have
    \begin{align*}
        \E\left[\|\calR(\theta_t + g_t) - \calR(\theta_t)\|_F^2\right]
        &= \E\left[\|\calR(\theta_t) + \nabla \calR(\theta_t)^\top g_t + \mathcal{E}(\theta_t,g_t) - \calR(\theta_t)\|_F^2\right]\\ 
        &= \E\left[\|\nabla \calR(\theta_t)^\top g_t + \mathcal{E}(\theta_t,g_t)\|_F^2\right],
    \end{align*}
    where we define $\mathcal{E}(\theta_t, g_t)$ to be the Taylor remainder, which by Assumption \ref{assumption:bounded-residual-derivatives} satisfies
    \[
        \|\mathcal{E}(\theta_t, g_t)\|_F \le C\varepsilon^2\|g_t/\varepsilon\|^2.
    \]
    Combining this with the fact that $\E[\|g_t/\varepsilon\|^3] \le \E[\|g_t/\varepsilon\|^4] = n(n+2)$, since $g_t/\varepsilon$ is a standard Gaussian vector, gives
    \[
        \E\left[\|g_t\|\|\mathcal{E}(\theta_t, g_t)\|_F\right] = \varepsilon\E\left[\|g_t/\varepsilon\|\|\mathcal{E}(\theta_t, g_t)\|_F\right] \le C\varepsilon^3 n(n+2),\quad \text{and} \quad \E\left[\|\mathcal{E}(\theta_t, g_t)\|_F^2\right] \le C^2\varepsilon^4 n(n+2).
    \]
    Again, by Assumption \ref{assumption:bounded-residual-derivatives}, it follows that 
    \begin{align*}
        \E\left[\|\calR(\theta_t + g_t) - \calR(\theta_t)\|_F^2\right] 
        &= \E\left[\|\nabla\calR(\theta_t)^\top g_t\|_F^2 + 2\langle \nabla\calR(\theta_t)^\top g_t, \mathcal{E}(\theta_t, g_t)\rangle + \|\mathcal{E}(\theta_t, g_t)\|_F^2\right]\\
        &\le \E\left[\|\nabla\calR(\theta_t)^\top g_t\|_F^2 + 2C\|g_t\|\|\mathcal{E}(\theta_t, g_t)\|_F + \|\mathcal{E}(\theta_t, g_t)\|_F^2\right]\\
        &\le \E\left[\|\nabla\calR(\theta_t)^\top g_t\|_F^2\right] + 2C\varepsilon^3 n(n+2) + C^2\varepsilon^4 n(n+2),
    \end{align*}
    and similarly,
    \[
        \E\left[\|\calR(\theta_t + g_t) - \calR(\theta_t)\|_F^2\right]  \ge \E\left[\|\nabla\calR(\theta_t)^\top g_t\|_F^2\right] - 2C\varepsilon^3 n(n+2).
    \]
    Now, using the cyclic property of the trace, we have that
    \[
        \E\left[\|\nabla\calR(\theta_t)^\top g_t\|_F^2\right] = \E\left[\Tr\left(g_t^\top\nabla\calR(\theta_t)\nabla\calR(\theta_t)^\top g_t\right)\right] = \Tr(\nabla\calR(\theta_t)^\top \E[g_t g_t^\top]\nabla\calR(\theta_t)) = \varepsilon^2 \Tr(K(\theta_t)).
    \]
    Thus, we obtain
    \[
        -2C\varepsilon n(n+2) \le \E\left[\left\|\frac{\calR(\theta_t + g_t) - \calR(\theta_t)}{\varepsilon}\right\|_F^2\right] - \Tr(K(\theta_t)) \le 2C\varepsilon n(n+2) + C^2\varepsilon^2 n(n+2),
    \]
    and the result follows from $n(n+2) < (n+1)^2$ and by bounding from above with a larger suitable constant.
\end{proof}

\bibliographystyle{abbrv}
\bibliography{bibliography}
\end{document}